\theoremstyle{plain}    
 \newtheorem{theorem}{Theorem}[section]
 \numberwithin{equation}{section} 
 \numberwithin{figure}{section} 
 \theoremstyle{plain}
 \theoremstyle{plain}    
 \newtheorem{corollary}[theorem]{Corollary} 
 \theoremstyle{plain}    
 \newtheorem{proposition}[theorem]{Proposition} 
 \theoremstyle{plain}    
 \newtheorem{lemma}[theorem]{Lemma} 
 \theoremstyle{remark}
 \theoremstyle{definition}
\theoremstyle{definition}
\newtheorem{definition}[theorem]{Definition}
\def \ddbar {i\partial\bar{\partial}}
\title{On K\"ahler-Einstein Currents}
\author[Y. Chen]{Yifan Chen}
\address{Department of Mathemetics, University of California, Berkeley, Berkeley CA, USA}
\email{yifan-chen@berkeley.edu}
\author[S.-K. Chiu]{Shih-Kai Chiu}
\address{Department of Mathematics, University of California, Irvine, Irvine CA, USA}
\email{shihkaic@uci.edu}
\author[M. Hallgren]{Max Hallgren}
\address{Department of Mathematics, Rutgers University, Hill Center for the Mathematical Sciences 
110 Frelinghuysen Rd.
Piscataway, NJ, USA}
\email{mh1564@scarletmail.rutgers.edu}
\author[G. Sz\'ekelyhidi]{G\'abor Sz\'ekelyhidi}
\address{Department of Mathematics, Northwestern University, Evanston,
  IL, USA}
\email{gaborsz@northwestern.edu}
\author[T. D. T\^o]{Tat Dat T\^o}
\address{Institut de Math\'ematiques de Jussieu-Paris Rive Gauche, Sorbonne Universit\'e, 4 place Jussieu, 75252 Paris Cedex 05, France}
\email{tat-dat.to@imj-prg.fr}
\author[F. Tong]{Freid Tong}
\address{Department of Mathematics, University of Toronto, 40 St. George Street, Toronto, ON, Canada}
\email{freid.tong@utoronto.ca}
\date{}
\begin{document}

\begin{abstract}
  We show that a general class of singular K\"ahler metrics with Ricci curvature bounded below define K\"ahler currents. In particular the result applies to singular K\"ahler-Einstein metrics on klt pairs, and an analogous result holds for K\"ahler-Ricci solitons. In addition we show that if a singular K\"ahler-Einstein metric can be approximated by smooth metrics on a resolution whose Ricci curvature has negative part that is bounded uniformly in $L^p$ for $p > \frac{2n-1}{n}$, then the metric defines an RCD space. 
\end{abstract}

\maketitle

\section{Introduction}
Singular K\"ahler-Einstein metrics have been studied extensively recently. Yau's existence results~\cite{Yau78} in the case of negative or zero Ricci curvature have been extended to singular varieties by Eyssidieux-Guedj-Zeriahi~\cite{EGZ09}, while Chen-Donaldson-Sun's solution of the Yau-Tian-Donaldson conjecture~\cite{CDS3} on the existence of K\"ahler-Einstein metrics on Fano manifolds has been extended to the singular setting by Liu-Xu-Zhuang~\cite{LXZ22}, Li-Tian-Wang~\cite{LTW} and Li~\cite{Li22}. In fact the most natural setting is to consider klt pairs $(X, D)$, where $X$ is a normal projective variety, $D$ is a $\mathbb{Q}$-divisor, and $K_X + D$ is a $\mathbb{Q}$-line bundle that is either  negative, positive, or has a multiple that is trivial. In this case a K\"ahler-Einstein metric $\omega$ on the pair $(X,D)$ can be thought of as a K\"ahler-Einstein metric on $X^{\text{reg}}\setminus D$, with cone singularities along the smooth part of the divisor $D$, whose angles are determined by the coefficients. See Section~\ref{sec:setup} for the detailed definition. 

In this paper we study the metric geometry of such a singular K\"ahler-Einstein metric $\omega$. Our first result is the following.

\begin{theorem}\label{thm:Kcurrent}
  A singular K\"ahler-Einstein metric $\omega$ on a klt pair $(X,D)$ defines a K\"ahler current. In other words if $\omega_X$ is a smooth K\"ahler metric on $X$ (see Definition~\ref{defn:smoothKahler}), then we have $\omega > \epsilon \omega_X$ for some $\epsilon > 0$. 
\end{theorem}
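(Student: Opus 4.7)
The plan is to work on a log resolution $\pi \colon Y \to X$ of the pair $(X,D)$ and to establish a uniform upper bound for $\operatorname{tr}_{\omega_t}(\pi^*\omega_X)$ for a suitable smooth approximation $\omega_t$ of $\pi^*\omega$; inverting this bound on $X^{\mathrm{reg}}\setminus D$ and extending across the singular locus then yields $\omega \geq \epsilon \omega_X$ as currents on $X$. The natural analytic tool is the Chern--Lu (Yau's Schwarz lemma) inequality, coupled with a barrier function concentrated on the exceptional divisor.

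First, choose a log resolution $\pi \colon Y \to X$ such that $E = \pi^{-1}(X^{\mathrm{sing}} \cup D)$ is a simple normal crossing divisor with components $E_i = \{s_i = 0\}$ equipped with smooth Hermitian metrics on the corresponding line bundles, and fix a smooth K\"ahler form $\omega_Y$ on $Y$. Using the existence theory for singular K\"ahler--Einstein metrics in the klt setting (Eyssidieux--Guedj--Zeriahi and its refinements), approximate $\pi^*\omega$ by smooth K\"ahler metrics $\omega_t = \pi^*\omega_X + t\omega_Y + \ddbar \varphi_t$ solving a regularized complex Monge--Amp\`ere equation, with uniformly bounded potentials $\varphi_t$ and $\omega_t \to \pi^*\omega$ away from $E$.

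The central step is to apply Chern--Lu to the test function
$$H_t = \log \operatorname{tr}_{\omega_t}(\pi^*\omega_X + t\omega_Y) - A\varphi_t + \delta \sum_i \log |s_i|^2,$$
with $A$ a large constant and $\delta > 0$ a small one. Because $\delta \sum_i \log |s_i|^2 \to -\infty$ along $E$, the maximum of $H_t$ is attained on a compact subset of $Y \setminus E$. At such a maximum, Yau's Schwarz lemma together with the identity $\Delta_{\omega_t}\varphi_t = n - \operatorname{tr}_{\omega_t}(\pi^*\omega_X + t\omega_Y)$ produces, once $A$ is chosen to dominate the bisectional curvature of the target, a uniform bound $\operatorname{tr}_{\omega_t}(\pi^*\omega_X + t\omega_Y) \leq C$ there. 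Trading this against the $\delta$-barrier and the uniform bound on $\varphi_t$, one gets $\operatorname{tr}_{\omega_t}(\pi^*\omega_X) \leq C' \prod_i |s_i|^{-2\delta}$ uniformly in $t$.

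Passing $t \to 0$ yields the same estimate for $\pi^*\omega$ on $Y \setminus E$, equivalent to $\pi^*\omega \geq c \prod_i |s_i|^{2\delta} \cdot \pi^*\omega_X$; this descends to $\omega \geq c \omega_X$ on $X^{\mathrm{reg}} \setminus D$ after letting $\delta \to 0$, and extends to a global inequality of currents using that both sides are closed positive $(1,1)$-currents and $X^{\mathrm{sing}} \cup D$ is pluripolar. The main obstacle is that any standard smoothing $\omega_t$ has Ricci curvature that degenerates near $E$, so the constants entering Chern--Lu are not a priori uniform up to $E$; the barrier exponent $\delta$ and the approximation scheme have to be chosen in tandem so that the divergent curvature contributions are absorbed into $\delta\,\ddbar\sum_i \log|s_i|^2$, while the resulting estimate is still strong enough to survive the final passage $\delta \to 0$.
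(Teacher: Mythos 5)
The gap is the one you flag in your closing paragraph, and it is fatal to the maximum-principle strategy as stated. For any standard regularization $\omega_t$ on a resolution $\pi:Y\to X$ of a klt pair, the Monge--Amp\`ere density forces $\mathrm{Ric}(\omega_t)$ to have a negative part that is \emph{pointwise unbounded} near the exceptional divisor as $t\to 0$: one contribution is $-\ddbar g_t$, where $g_t$ smooths $g = \sum_i a_i\log|s_i|_{h_i}^2$ with $a_i>0$, and $\ddbar g_t$ develops a positive spike whose mass approximates the current of integration $\sum_i a_i[E_i]$. Your barrier $\delta\sum_i\log|s_i|^2$ enters $\Delta_{\omega_t}H_t$ only through $-\delta\,\mathrm{tr}_{\omega_t}\sum_i\Theta_{h_i} = O(\delta\,\mathrm{tr}_{\omega_t}\omega_Y)$, a term of the same benign order as everything else in Chern--Lu. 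It cannot dominate $g^{i\bar l}g^{k\bar j}\mathrm{Ric}(\omega_t)_{i\bar j}h_{k\bar l}/\mathrm{tr}_{\omega_t}\pi^*\omega_X \geq -|\mathrm{Ric}_-(\omega_t)|$ when the maximum of $H_t$ drifts toward $E$ as $t\to 0$, because $|\mathrm{Ric}_-(\omega_t)|$ at the maximum point admits no uniform bound. The barrier pushes the maximum away from $E$, but it gives you no Ricci lower bound at that point. This is precisely why Guedj--Guenancia--Zeriahi could run such an argument only under extra hypotheses (smoothability, or dimension three with $\lambda\leq 0$), where one can arrange approximations with Ricci genuinely bounded below.

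The paper's route is therefore genuinely different and solves exactly this obstruction. Proposition~\ref{prop:L1approx} produces a tame approximation $\omega_\epsilon$ for which only the integral quantity $\Vert(\mathrm{Ric}(\omega_\epsilon) + 2A(\omega_\epsilon + \omega_X))_-\Vert_{L^1(\omega_\epsilon)}$ tends to zero: the negative part can still be pointwise huge near $E$, but its total mass is bounded by the cohomological quantity $\sum_i a_i[E_i]\cdot[\pi^*\omega_X+\epsilon\omega_Y]^{n-1}\to 0$. Then, instead of evaluating Chern--Lu at a maximum, one convolves $\Phi = \max\{0,\log\mathrm{tr}_{\omega_\epsilon}\pi^*\omega_X - C_1 u_\epsilon\}$ against the heat kernel of $(Y,\omega_\epsilon)$ (uniformly bounded above by the Guo--Phong--Song--Sturm estimates of Theorem~\ref{thm:GPSS}) and integrates in time; the $L^1$ smallness of $\mathrm{Ric}_-$ is precisely the hypothesis that makes this integral inequality close, yielding a uniform bound on $\int_Y \Phi\, H(x,\cdot,t_0)\,\omega_\epsilon^n$ and, after $\epsilon\to 0$ and $t_0\to 0$, the estimate $\mathrm{tr}_\omega\omega_X\leq C$. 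If you wanted to keep a barrier-and-maximum-principle proof, you would need a regularization with a pointwise Ricci lower bound, which is not available for general klt pairs.
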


The result follows from the following much more general statement for singular K\"ahler metrics with Ricci curvature bounded below.
\begin{theorem}\label{thm:generalKcurrent}
  Suppose that $(X, \omega_X)$ is a compact normal K\"ahler space, with smooth metric $\omega_X$. Suppose that $\omega = \omega_X + \ddbar u$ is a positive current with bounded potential satisfying the following properties:
  \begin{enumerate}
  \item $u$ is smooth on $X^{\text{reg}}\setminus D$ for a divisor $D$,
  \item $\omega^n = e^F \omega_X^n$, where $e^F\in L^p(\omega_X)$ for some $p > 1$, and $F\in L^1(\omega_X)$. 
  \item $\mathrm{Ric}(\omega) \geq -A(\omega+\omega_X)$ on $X^{\text{reg}}$ in the sense of currents, for some $A > 0$. 
  \end{enumerate}
  Then there is a constant $C > 0$, depending on $A, p$, an upper bound for $\Vert e^F\Vert_{L^p}$, and $(X,\omega_X)$, such that $\omega > C^{-1}\omega_X$. 
\end{theorem}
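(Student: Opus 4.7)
The strategy is a Chern--Lu type maximum principle on the smooth open set $X^{\mathrm{reg}}\setminus D$, combined with a cutoff to handle non-compactness. The goal is a uniform upper bound on $\operatorname{tr}_\omega\omega_X$: if $\mu_1,\dots,\mu_n$ denote the $\omega_X$-eigenvalues relative to $\omega$, then $\max_i\mu_i\leq\sum_i\mu_i=\operatorname{tr}_\omega\omega_X$, so such a bound is equivalent to the claimed $\omega\geq C^{-1}\omega_X$.

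Working in a frame simultaneously orthonormal for $\omega$ and diagonal for $\omega_X$ on $X^{\mathrm{reg}}\setminus D$, the Ricci hypothesis reads $R_{i\bar i}(\omega)\geq -A(1+\mu_i)$. Combining the standard pointwise Chern--Lu inequality with this bound and $\sum_i\mu_i^2\leq(\sum_i\mu_i)^2$ produces
\[
\Delta_\omega\log\operatorname{tr}_\omega\omega_X \;\geq\; -A - (A+B)\operatorname{tr}_\omega\omega_X,
\]
where $B$ is an upper bound on the bisectional curvature of $\omega_X$. Since $\omega-\omega_X=\ddbar u$ gives $\Delta_\omega u = n - \operatorname{tr}_\omega\omega_X$, setting $H := \log\operatorname{tr}_\omega\omega_X - \lambda u$ yields
\[
\Delta_\omega H \;\geq\; (\lambda-A-B)\operatorname{tr}_\omega\omega_X - (A+\lambda n).
\]
For $\lambda>A+B$ this forces a uniform bound on $\operatorname{tr}_\omega\omega_X$ at any interior maximum of $H$, which, together with the assumed boundedness of $u$, would give the claim provided the supremum of $H$ is attained in $X^{\mathrm{reg}}\setminus D$.

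The main obstacle is that $X^{\mathrm{reg}}\setminus D$ is not compact. I would overcome this with a cutoff: fix holomorphic sections $s_D, s_S$ of sufficient powers of an ample line bundle on $X$ (available from the projectivity implicit in the smooth K\"ahler structure on the normal K\"ahler space $X$) with $\{s_D=0\}\supset D$ and $\{s_S=0\}\supset X\setminus X^{\mathrm{reg}}$, equipped with smooth Hermitian metrics $h_D, h_S$ whose curvatures are dominated by a constant multiple of $\omega_X$, and consider
\[
H_\epsilon := H + \epsilon\log\bigl(|s_D|_{h_D}^2|s_S|_{h_S}^2\bigr).
\]
Taking the powers large enough to dominate any a priori blow-up of $\log\operatorname{tr}_\omega\omega_X$ along $D\cup(X\setminus X^{\mathrm{reg}})$, the supremum of $H_\epsilon$ is attained at an interior point $p_\epsilon\in X^{\mathrm{reg}}\setminus D$. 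By Poincar\'e--Lelong the cutoff contributes only an $O(\epsilon\operatorname{tr}_\omega\omega_X)$ error to the differential inequality, which is absorbed for $\epsilon$ small, and the Chern--Lu argument then bounds $\operatorname{tr}_\omega\omega_X(p_\epsilon)$ independently of $\epsilon$. Using $\|u\|_{L^\infty}<\infty$ and the normalization $|s_D|_{h_D},|s_S|_{h_S}\leq 1$, this upgrades to $H_\epsilon\leq C_1$ globally, and letting $\epsilon\to 0$ at any fixed $x\in X^{\mathrm{reg}}\setminus D$ yields the desired bound on $\operatorname{tr}_\omega\omega_X(x)$. The delicate point is to execute the cutoff in a manner compatible with the normal space structure of $X$, producing sections with the right vanishing order and Hermitian metrics with the required curvature control. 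The dependence of the final constant on $\|e^F\|_{L^p}$ enters only through $\|u\|_{L^\infty}$, which is provided by the Ko{\l}odziej--Eyssidieux--Guedj--Zeriahi $L^\infty$ estimate for Monge--Amp\`ere equations on singular K\"ahler spaces.
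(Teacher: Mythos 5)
Your Chern--Lu computation is correct, and you have identified the right quantity $\operatorname{tr}_\omega\omega_X$ to bound, but the cutoff step contains a gap that cannot be filled and that undoes the whole argument. When you write ``taking the powers large enough to dominate any a priori blow-up of $\log\operatorname{tr}_\omega\omega_X$ along $D\cup(X\setminus X^{\mathrm{reg}})$,'' you are implicitly assuming that $\operatorname{tr}_\omega\omega_X$ grows at most like a fixed negative power of $|s_D|_{h_D}\cdot|s_S|_{h_S}$ near the singular set. No such a priori bound is available. The hypothesis $\omega^n=e^F\omega_X^n$ with $e^F\in L^p$ controls only the \emph{product} of the eigenvalues of $\omega_X$ with respect to $\omega$, not their sum: a single eigenvalue can blow up arbitrarily fast while the others degenerate, keeping the determinant bounded but making $\operatorname{tr}_\omega\omega_X$ uncontrolled. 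Without a polynomial growth bound, $H_\epsilon$ may tend to $+\infty$ along a sequence approaching $D\cup X^{\mathrm{sing}}$ faster than $\epsilon\log(|s_D|^2|s_S|^2)$ tends to $-\infty$, so the supremum of $H_\epsilon$ need not be attained in $X^{\mathrm{reg}}\setminus D$ and the maximum principle does not apply. Establishing such a growth bound is in effect equivalent to proving the theorem, so this is not a technicality one can fix by ``taking $\epsilon$ small'' or ``the powers large.''

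This is precisely the obstruction that forces the paper onto a different route. The paper's proof first constructs (Proposition~\ref{prop:L1approx}) a \emph{tame approximation}: a family of globally smooth K\"ahler metrics $\omega_\epsilon$ on a compact resolution $\pi\colon Y\to X$, converging to $\pi^*\omega$, with uniformly bounded potentials and uniformly $L^p$-bounded volume ratios. Because $Y$ is compact and $\omega_\epsilon$ is smooth, there is no compactification issue; but the price is that one loses any pointwise lower bound on $\mathrm{Ric}(\omega_\epsilon)$ and retains only $\Vert(\mathrm{Ric}(\omega_\epsilon)+2A(\omega_\epsilon+\pi^*\omega_X))_-\Vert_{L^1(\omega_\epsilon)}\to 0$. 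To exploit $L^1$ rather than pointwise Ricci control, the classical maximum principle is replaced by convolution of $\Phi=\max\{0,\log\operatorname{tr}_{\omega_\epsilon}\pi^*\omega_X-C_1u_\epsilon\}$ against the heat kernel of $(Y,\omega_\epsilon)$, using the uniform heat kernel bounds of Guo--Phong--Song--Sturm (Theorem~\ref{thm:GPSS}); this converts the $L^1$-small bad Ricci term into a small error in the heat-kernel average. Letting $\epsilon\to 0$ and then $t\to 0$ produces the pointwise bound on $\operatorname{tr}_\omega\omega_X$. Your direct argument on $X^{\mathrm{reg}}\setminus D$ is essentially the strategy of Guedj--Guenancia--Zeriahi, which is known to succeed only under extra hypotheses (smoothability, low dimension) precisely because it requires an approximation with pointwise Ricci lower bounds; the approximation-plus-heat-kernel argument is the new input needed to treat the general case.
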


We will also give an extension of Theorem~\ref{thm:Kcurrent} to shrinking K\"ahler-Ricci solitons. 

\begin{theorem}\label{thm:KRScurrent}
  A singular shrinking K\"ahler-Ricci soliton $(X, \omega, V)$ on a klt pair $(X,D)$ defines a K\"ahler current. 
\end{theorem}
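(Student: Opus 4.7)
The plan is to reduce Theorem~\ref{thm:KRScurrent} to (a drift-Laplacian variant of) Theorem~\ref{thm:generalKcurrent}, parallel to how the K\"ahler--Einstein case yields Theorem~\ref{thm:Kcurrent}. On $X^{\mathrm{reg}}\setminus D$ the soliton equation reads $\mathrm{Ric}(\omega) - L_V\omega = \omega$, so
\[
 \mathrm{Ric}(\omega) = \omega + L_V\omega = \omega + \ddbar\theta,
\]
where $\theta$ denotes the Hamiltonian of $V$ with respect to $\omega$. The extra term $\ddbar\theta$ is what distinguishes the soliton case from the K\"ahler--Einstein one and constitutes the main new difficulty.

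First I would set up the analytic framework. Since the imaginary part of $V$ generates a compact torus action on $X$, one may choose a smooth background metric $\omega_X$ that is invariant under this torus, together with a smooth Hamiltonian $\theta_0$ for $V$ with respect to $\omega_X$, and write $\omega = \omega_X + \ddbar u$ with $u$ torus-invariant. The existence theory for singular K\"ahler--Ricci solitons on klt pairs ensures that $u$ is bounded on $X$, smooth on $X^{\mathrm{reg}}\setminus D$, and that the derivative $V(u)$ is bounded, so that $\theta = \theta_0 + V(u)$ is also bounded. Rewriting the soliton equation then yields a twisted complex Monge--Amp\`ere equation whose density $e^F$ relative to $\omega_X^n$ lies in $L^p$ for some $p>1$ thanks to the klt condition, exactly as in the K\"ahler--Einstein case. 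This verifies hypotheses (1) and (2) of Theorem~\ref{thm:generalKcurrent}.

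The main obstacle is to verify the Ricci lower bound (hypothesis (3)) in spite of the $\ddbar\theta$ term: since $\theta$ is only known to be bounded, $\ddbar\theta$ is not controlled as a current, and Theorem~\ref{thm:generalKcurrent} cannot be invoked as a black box. Instead, I would revisit its proof, which I expect to proceed by a Chern--Lu type maximum principle applied to a quantity of the form $\log\mathrm{tr}_\omega\omega_X - A u$. Replacing the Laplacian $\Delta_\omega$ by the drift Laplacian $\Delta_\omega - V$ converts the appearance of $\ddbar\theta$ in the Ricci identity into a $V$-derivative of the test function, and the boundedness of $V(u)$ allows the maximum principle to close with constants depending only on the soliton data. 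A torus-invariant smoothing of $\omega$ on a log resolution, combined with the $L^\infty$ estimates already used in Theorem~\ref{thm:generalKcurrent}, should then give the desired uniform bound $\omega > C^{-1}\omega_X$ upon passing to the limit.
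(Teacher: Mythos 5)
Your proposal captures the right strategic skeleton and is essentially the paper's approach: work on a torus-equivariant resolution with torus-invariant tame approximations, replace $\Delta$ by the drift Laplacian $\Delta - V$ in the Chern--Lu inequality so that $\ddbar\theta$ is absorbed, and exploit boundedness of the Hamiltonian (equivalently, boundedness of the moment polytope). However, one phrase points to a genuine soft spot: you say the drift modification "allows the maximum principle to close." The proof of Theorem~\ref{thm:generalKcurrent} is \emph{not} a pointwise maximum principle, and one cannot make it into one here either — the Ricci curvature of any smooth approximation $\omega_\delta$ on the resolution $Y$ necessarily blows down to $-\infty$ near the exceptional divisor, so no pointwise lower bound on $\mathrm{Ric}(\omega_\delta)+\mathcal{L}_{\widetilde V}\omega_\delta$ is available. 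What makes the argument close is the combination of (i) $L^1$-smallness of the negative part $(\mathrm{Ric}(\omega_\delta)+\mathcal{L}_{\widetilde V}\omega_\delta + \omega_\delta)_-$, which must be engineered when constructing the tame approximation (the soliton analogue of Proposition~\ref{prop:L1approx}), and (ii) a uniform upper bound for the \emph{drift} heat kernel of $(Y,\omega_\delta, f_\delta)$, against which the Chern--Lu differential inequality is integrated. Point (ii) is not contained in Theorem~\ref{thm:GPSS} as stated and requires extending the Guo--Phong--Song--Sturm estimates to the weighted manifolds $(Y,\omega_\delta,e^{-f_\delta}\omega_\delta^n)$; the uniform boundedness of $f_\delta$ (via the moment polytope) is what makes this extension go through. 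So the drift-Laplacian idea is correct and central, but to make the proof rigorous you must replace the "maximum principle closes" step by the heat-kernel integral argument, and you must establish uniform drift heat kernel bounds, neither of which is automatic.
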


The question of whether singular K\"ahler-Einstein metrics define K\"ahler currents was considered previously by Guedj-Guenancia-Zeriahi~\cite{GGZ}. They prove the K\"ahler current property for smoothable varieties, and in three dimensions in the case of negative and zero Ricci curvature. The methods used in \cite{GGZ} rely essentially on approximations of the singular metric with smooth metrics that have Ricci curvature bounded below. See also \cite[p. 40]{PTT} and \cite[Proposition 4.3]{GP} for related results.

Our approach is based on the recent work \cite{Sz24} where approximations with more general metrics are combined with the heat kernel estimates of Guo-Phong-Song-Sturm~\cite{GPSS2}. In fact \cite[Theorem 16]{Sz24} shows the K\"ahler current property for singular K\"ahler-Einstein metrics under the assumption that the metric can be approximated with constant scalar curvature K\"ahler metrics. The new ingredient in this paper is the observation that it is enough to ensure that the approximating metrics have Ricci curvature controlled suitably in $L^1$, and that such approximations can always be found. To state this more precisely, let us introduce the following. 

\begin{definition}\label{defn:tameapprox}
  Let $\omega$ be a closed positive $(1,1)$-current on $X$ such that $\omega$ is smooth on $X^{\text{reg}}\setminus D$, and locally $\omega = i\partial\bar\partial u$ for bounded $u$. Let $\pi: Y \to X$ be a resolution of singularities that is a biholomorphism over $X^{\text{reg}}\setminus D$. We say that the smooth K\"ahler metrics $\omega_\epsilon$ on $Y$ are a \emph{tame approximation} of $\omega$, if we have the following: 
  \begin{itemize}
      \item[(a)] For smooth reference K\"ahler metrics $\omega_X$, $\omega_Y$ on $X,Y$, we have $\omega_\epsilon = \pi^*\omega_X + \epsilon \omega_Y + i\partial\bar\partial u_\epsilon$, and we have $\omega_\epsilon \to \pi^*\omega$ as $\epsilon \to 0$, locally smoothly on $\pi^{-1}(X^{\text{reg}}\setminus D)$, 
      \item[(b)] We have the estimates
      \[ \sup_Y |u_\epsilon| < C, \qquad \int_Y \left(\frac{\omega_\epsilon^n}{\omega_Y^n}\right)^p\, \omega_Y^n < C, \]
      for $C > 0$ and $p > 1$, independent of $\epsilon$. 
  \end{itemize}
\end{definition}

The purpose of these assumptions is that the results of Guo-Phong-Song-Sturm~\cite{GPSS2} can be applied uniformly to the family of metrics $\omega_\epsilon$. In Theorem~\ref{thm:generalKcurrent} we use a tame approximation of $\omega$ with metrics satisfying $\Vert (\mathrm{Ric}(\omega_\epsilon) + 2A( \omega_{\epsilon}+\omega_X))_-\Vert_{L^1(\omega_\epsilon)} \to 0$. 

To state our second main result, we define $\hat{X}$ to be the metric completion of $(X^{\text{reg}}\setminus D, \omega)$, where $\omega$ is a singular K\"ahler-Einstein metric on the pair $(X,D)$. Equipped with the volume form $\omega^n$, $\hat{X}$ defines a metric measure space. Recall that in \cite[Theorem 4]{Sz24} it was shown that if a singular K\"ahler-Einstein metric $\omega$ on $X$ has a tame approximation with constant scalar curvature metrics $\omega_\epsilon$, then the metric measure space $\hat{X}$ is a non-collapsed RCD space in the sense of de Philippis-Gigli~\cite{DPG}. In other words $\hat{X}$ satisfies synthetic Ricci curvature lower bounds. Our second main result is a strengthening of this result, requiring only that we have a tame approximation with metrics $\omega_\epsilon$, such that the negative parts $\mathrm{Ric}_-$ of their Ricci curvatures are uniformly bounded in $L^p$ for some $p > \frac{2n-1}{n}$, where $n$ is the complex dimension. 

\begin{theorem}\label{thm:LpRCD}
  Suppose that $\omega$ is a singular K\"ahler-Einstein metric on the klt pair $(X,D)$, and that $\omega$ has a tame approximation with metrics $\omega_\epsilon$ satisfying $\Vert \mathrm{Ric}_-\Vert_{L^p(\omega_\epsilon)} < C$, for $C$ independent of $\epsilon$, and $p > \frac{2n-1}{n}$. Then the metric measure space $\hat{X}$ is a non-collapsed RCD space. 
\end{theorem}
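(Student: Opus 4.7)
The plan is to realize $\hat{X}$ as a measured Gromov-Hausdorff limit of the smooth K\"ahler manifolds $(Y,\omega_\epsilon)$ appearing in the tame approximation, and then pass the RCD condition to the limit using a stability argument tailored to $L^p$ Ricci lower bounds, rather than the pointwise bounds available in the constant scalar curvature setting of \cite{Sz24}.

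First, I would apply Theorem \ref{thm:generalKcurrent} to obtain a K\"ahler current bound $\omega \geq c\,\omega_X$ for some $c>0$. Combined with the heat kernel and Sobolev estimates of Guo-Phong-Song-Sturm, which Definition \ref{defn:tameapprox} allows us to apply uniformly in $\epsilon$, this yields uniform diameter and non-collapsing estimates together with a uniform Sobolev inequality for $(Y,\omega_\epsilon)$. Using in addition the smooth convergence $\omega_\epsilon \to \pi^*\omega$ on $\pi^{-1}(X^{\text{reg}}\setminus D)$, a subsequence of $(Y,\omega_\epsilon,\omega_\epsilon^n)$ converges in the measured Gromov-Hausdorff sense to a metric measure space, which I would identify with $\hat{X}$ equipped with $\omega^n$. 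The K\"ahler current bound is essential here: it ensures that distances on the regular part cannot collapse through the exceptional set, so $\hat{X}$ carries the correct metric structure, while Definition \ref{defn:tameapprox}(b) ensures that the singular locus is $\omega^n$-negligible in the limit.

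Next, I would establish a Bochner-type inequality on the approximating spaces that is stable under the above convergence. On each smooth $(Y,\omega_\epsilon)$ of real dimension $N=2n$, Bochner's identity reads
\[
\tfrac{1}{2}\Delta_{\omega_\epsilon}|\nabla u|^2 = |\mathrm{Hess}\,u|^2 + \langle \nabla\Delta u,\nabla u\rangle + \mathrm{Ric}(\omega_\epsilon)(\nabla u,\nabla u),
\]
and the negative Ricci contribution, tested against heat-smoothed cut-offs, can be controlled by $\|\mathrm{Ric}_-\|_{L^p(\omega_\epsilon)}$ via H\"older and the uniform Sobolev inequality, provided $p$ exceeds a threshold determined by parabolic Moser iteration. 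The condition $p > \tfrac{2n-1}{n}$ is precisely what is needed to convert the integral Ricci bound into an effective Bakry-\'Emery correction with $\epsilon$-independent constants, yielding an $(K,N)$-type Bochner inequality for $(Y,\omega_\epsilon)$. By the stability of the RCD condition under measured Gromov-Hausdorff convergence (Ambrosio-Gigli-Savar\'e, Erbar-Kuwada-Sturm), $\hat{X}$ inherits the RCD$(-K,N)$ property, and non-collapsing follows from the uniform volume lower bounds.

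The main obstacle will be making the Moser iteration step sharp at the exponent $\tfrac{2n-1}{n}$ and ensuring the resulting Bochner inequality is compatible with the partial smooth convergence. In particular, one must rule out curvature concentration along the exceptional set $\pi^{-1}(D\cup X^{\text{sing}})$ contaminating the integrated inequality in the limit; here the combination of the K\"ahler current bound from Theorem \ref{thm:generalKcurrent} and the uniform volume-form integrability in Definition \ref{defn:tameapprox}(b) is what allows this set to be treated as negligible both metrically and measure-theoretically.
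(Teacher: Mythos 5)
Your proposal takes a genuinely different route from the paper, and unfortunately it contains gaps that I do not see how to close.

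The central difficulty with your plan is the appeal to stability of the RCD condition under measured Gromov--Hausdorff convergence. The stability theorems of Ambrosio--Gigli--Savar\'e and Erbar--Kuwada--Sturm apply to sequences of spaces each of which satisfies a \emph{uniform} $RCD(K,N)$ condition; the tame approximations $(Y,\omega_\epsilon)$ do not satisfy any uniform pointwise lower Ricci bound (their $\mathrm{Ric}_-$ is only controlled in $L^p$, and indeed blows up near the exceptional set), so they are not $RCD(K,N)$ spaces with $\epsilon$-independent $K$, and the cited stability results do not apply. You gesture at an ``integrated Bochner inequality with $L^p$ error'' being stable, but this is precisely the nontrivial step: the synthetic Bakry--\'Emery formulation requires a pointwise-in-measure inequality, and it is not obvious how to pass from an $L^p$-perturbed integrated Bochner estimate on $(Y,\omega_\epsilon)$ to the $BE(K,N)$ condition on the limit space without a new idea. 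Notice also that this objection already applies in the constant-scalar-curvature case of \cite{Sz24}: cscK metrics do not have uniform pointwise Ricci lower bounds either, so \cite{Sz24} does not proceed via mGH stability. Your phrasing ``pointwise bounds available in the constant scalar curvature setting'' suggests a misreading of that paper, which actually uses the same Honda-type criterion that the present paper uses.

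The paper's argument avoids mGH convergence entirely. It invokes Honda's criterion (Proposition~\ref{prop:Honda}): for an almost smooth metric measure space like $\hat{X}$, the non-collapsed $RCD(2n,\lambda)$ condition is equivalent to the Lipschitz regularity of eigenfunctions of the Laplacian on $X^{\mathrm{reg}}\setminus D$. The proof then establishes this Lipschitz bound directly through a heat-kernel integration of a differential inequality for $(1+|\nabla f|^2)^\beta$, $\beta\in(0,1)$, for eigenfunctions $f$ on the approximating manifolds, and passes to the limit via convergence of eigenfunctions (Lemma~\ref{lem:eigenfunction-convergence}). This sidesteps both the need to establish measured GH convergence of $(Y,\omega_\epsilon)$ to $\hat{X}$ (which is itself subtle, since one has to rule out the exceptional set contributing length or mass in the limit) and any stability theory for $L^p$-Ricci.

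You also misidentify the source of the exponent $\tfrac{2n-1}{n}$: it does not come from a Moser iteration threshold (which for integral Ricci bounds on a $2n$-real-dimensional manifold would require something like $p>n$, much larger). It comes from the improved Kato inequality $|\nabla^2 f|^2 \geq (1+\kappa)|\nabla|\nabla f||^2 - C_\kappa\mu^2 f^2$ valid for any $\kappa<\tfrac{1}{2n-1}$, which is the key new analytic ingredient relative to \cite{Sz24}. In the paper one needs to choose $\beta$ simultaneously satisfying $2(1-\beta)<1+\kappa$ (so that the Kato term dominates) and $(1-\beta)^{-1}<p$ (so that H\"older plus the uniform $L^p$ bound makes the Ricci error term integrable against the heat kernel); solving these constraints gives exactly $p>\tfrac{2n-1}{n}$. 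Your proposal makes no use of the Kato inequality and therefore cannot reproduce this threshold. Without it, one would be stuck with a much worse exponent and could not establish the theorem as stated.
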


\noindent In fact a more general result holds, only requiring a lower bound for the Ricci curvature on $X^{\text{reg}}\setminus D$, see Theorem~\ref{thm:RCD2} for the precise statement. The proof of this result follows similar lines to the proof of \cite[Proposition 15]{Sz24} and the sharper result here follows from using an improved Kato inequality for the gradient of eigenfunctions. An application of the case $p=2$ is to the setting where a singular K\"ahler-Einstein metric on $X$ has tame approximations with extremal K\"ahler metrics, given in Corollary~\ref{cor:extremal}. This generalizes the result in \cite{Sz24}, where tame approximations with constant scalar curvature K\"ahler metrics were considered. Recently a similar result to Theorem~\ref{thm:LpRCD} was also obtained by Guo-Song~\cite{GS25}. 

This paper is organized as follows. In section~\ref{sec:setup}, we discuss some background and setup of the problem. Theorem~\ref{thm:generalKcurrent} will be proved in section~\ref{sec:KE}, and Theorem~\ref{thm:KRScurrent} is proved in section~\ref{sec:KRS}. Finally, we will prove Theorem~\ref{thm:LpRCD} in section~\ref{sec:RCD}. 

\subsection*{Acknowledgements}
This work arose from the AIM workshop "PDE methods in complex geometry", with further work performed at SLMath (formerly MSRI). We are grateful for the great working environment provided by them. We would like to thank Jacob Sturm for helpful discussions and also Bin Guo and Jian Song for sharing a preliminary version of their paper \cite{GS25}.  We would also like to thank Henri Guenancia, Valentino Tosatti, Chung-Ming Pan, and Duong H. Phong for helpful comments.

The work was supported in part by the following grants: 
S.C. and F.T. by NSF grant DMS-1928930; M.H. by NSF grant DMS-2202980;
G. Sz. by NSF grant DMS-2203218; T.D.T by ANR-21-CE40-0011-01,
PEPS-JCJC-2024 and Tremplins-2024.

\section{Background}\label{sec:setup}

Let $X$ be a compact normal complex analytic space. We recall the following notion of a smooth K\"ahler metric on $X$ from \cite{EGZ09,BBEGZ19}.
\begin{definition} \label{defn:smoothKahler}
A smooth K\"ahler metric on $X$ is defined by a K\"ahler metric $\omega$ on $X^{\text{reg}}$ such that for any $x\in X$, there is an open subset $U\ni x$ and a local holomorphic embedding $j_x: U\hookrightarrow \mathbb{C}^{N_x}$ such that $\omega|_{U\cap X^{\text{reg}}} = j_x^\ast \theta$ for some smooth K\"ahler metric $\theta$ on $\mathbb{C}^{N_x}$. The space $X$ is defined to be a singular K\"ahler space if it admits a smooth K\"ahler metric. 
\end{definition}

Let $(X,D)$ be a pair of a compact normal K\"ahler space $X$, equipped with an effective $\mathbb{Q}$-divisor $D$. Suppose that $K_X + D$ is a $\mathbb{Q}$-Cartier divisor. We say that the pair $(X,D)$ has klt singularities if the coefficients of $D$ are in $(0, 1)$ and there is a log resolution $\pi: Y\to X$ satisfying 
\[ K_Y +D'= \pi^{\ast}(K_X+D) + \sum_{i=1}^k a_i E_i, \]
where $a_i >-1$, $E_i$ are the exceptional divisors, and $D'$ is the strict transform of $D$. This condition is independent of the resolution chosen.

Following Berman-Boucksom-Eyssidieux-Guedj-Zeriahi~\cite{BBEGZ19}, we 
define singular K\"ahler-Einstein metrics on a klt pair $(X,D)$ as follows. Let $r$ be a positive integer such that $r(K_X+D)$ is Cartier. Let $\phi$ define a smooth Hermitian metric on the $\mathbb{Q}$-line bundle $K_X+D$. This means that if $\sigma$ is a nowhere vanishing holomorphic section of $r(K_X + D)$ over an open set $U$ for some $r > 0$, then $|\sigma|_{r\phi}$ is a smooth function on $U$ (a restriction of a smooth function under a local embedding into $\mathbb{C}^N$). We  then define the adapted measure  \[ \mu_\phi = \frac{ \Big(i^{rn^2} \sigma\wedge \bar\sigma\Big)^{1/r}}{ |\sigma|_{r\phi}^{2/r}}. \]
Note that in this expression we are locally viewing $\sigma^{1/r}$ as a holomorphic $n$-form on $X$ with poles along $D$. The Ricci curvature of $\mu_\phi$ is given by $\mathrm{Ric}(\mu_\phi) = -\Theta + [D]$, where $\Theta$ is the curvature of the metric $\phi$. 
A singular K\"ahler-Einstein metric on $(X,D)$ is defined as follows, depending on the positivity of the $\mathbb{Q}$-line bundle $K_X+D$:
\begin{itemize}
    \item If $K_X+D$ is ample, then we can choose the metric $\phi$ on $K_X+D$ above so that its curvature form is a smooth K\"ahler metric $\omega_X$. Then $\omega = \omega_X + i\partial\bar\partial u$ is a singular K\"ahler-Einstein metric if $u\in L^\infty$ satisfies the Monge-Amp\`ere equation
    \[ (\omega_X + i\partial\bar\partial u)^n = e^u \mu_\phi.\] 
    \item If $-(K_X+D)$ is ample, then we choose the metric $\phi$ above so that its curvature form is $\Theta=-\omega_X$ for a smooth K\"ahler metric $\omega_X$. In this case $\omega = \omega_X + i\partial\bar\partial u$ is a singular K\"ahler-Einstein metric if $u\in L^\infty$ satisfies
    \[ (\omega_X + i\partial\bar\partial u)^n =  e^{-u} \mu_\phi.\]
     
    \item Finally, if $r(K_X+D)$ is trivial for some $r > 0$, then we choose $\phi$ so that its curvature vanishes. Then, given a smooth K\"ahler metric $\omega_X$, the metric 
    $\omega = \omega_X + i\partial\bar\partial u$ is a singular K\"ahler-Einstein metric if $u\in L^\infty$ satisfies
    \[ (\omega_X + i\partial\bar\partial u)^n = c \mu_\phi, \]
    for a normalizing constant $c$. 
\end{itemize}
In each case, the Ricci curvature of the singular K\"ahler-Einstein metric can be defined as a current, and we have
\[ \mathrm{Ric}(\omega) = \lambda\omega + [D], \]
where $\lambda= -1, 1, 0$ in the three cases above, and $D$ is the current of integration along $D$. It is known (see Cho-Choi~\cite{CC24} and Coman-Guedj-Zeriahi~\cite{CGZ}) that in each case the potential $u$ is continuous on $X$, and smooth on $X^{\text{reg}}\setminus D$. Since the klt condition implies that the measure $\mu_\phi$ has $L^p$-density relative to $\omega_X^n$ for some $p > 1$, Theorem~\ref{thm:Kcurrent} follows immediately from Theorem~\ref{thm:generalKcurrent}. 

Let us now recall the results of Guo-Phong-Song-Sturm~\cite{GPSS2} that we need, for tame approximations of $\omega$ as in Definition~\ref{defn:tameapprox}. From \cite[Theorem 2.1, 2.2 and Corollary 2.1]{GPSS2} (see also \cite{GPSS3, GT24} on removing the assumption of the lower bound of the volume form) we have the following.
\begin{theorem}[See \cite{GPSS2}] \label{thm:GPSS}
  If the metrics $\omega_\epsilon$ are a tame approximation of $\omega$ on a resolution $Y\to X$, then we have the following: 
  \begin{itemize}
      \item[(a)] We have a uniform Sobolev inequality
      \[ \left(\int_Y |f - \overline{f}|^{2q}\, \omega_\epsilon^n\right)^{1/q} \leq C \int_Y |\nabla f|^2_{\omega_\epsilon}\, \omega_\epsilon^n, \]
      for some $q > 1$ (in fact for any $q < \frac{n}{n-1}$), where $\overline{f}:= \frac{1}{\int_Y \omega_{\epsilon}^n}\int_Y f\omega_{\epsilon}^n$. 
      \item[(b)] We have a uniform bound for the heat kernel $H(x,y,t)$ of $(Y, \omega_\epsilon)$ of the form
      \[ H(x,y,t) \leq C(t), \]
      for $t\in (0,2]$, where $C(t)$ is continuous and $C(t)\to\infty$ as $t\to 0$. 
      \item[(c)] For any $q < \frac{n}{n-1}$ we have a lower bound $\lambda_k \geq c k^{\frac{q-1}{q}}$ for the $k^{th}$ eigenvalue of the Laplacian on $(Y, \omega_\epsilon)$. Here $c$ depends on $q, \omega$, but not on $\epsilon$. 
  \end{itemize}
\end{theorem}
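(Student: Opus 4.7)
My overall plan is to reduce all three statements to uniform $L^\infty$ estimates for auxiliary complex Monge-Amp\`ere equations on $Y$ in the style of Ko\l odziej. Such estimates are available precisely because the tame approximation of Definition~\ref{defn:tameapprox} supplies exactly the two ingredients these estimates require: a uniform sup-bound on the potential $u_\epsilon$, and a uniform $L^p$ bound on the density $\omega_\epsilon^n/\omega_Y^n$. Concretely, I would first show that for any $v \geq 0$ with $\int_Y v\, \omega_\epsilon^n = \int_Y \omega_\epsilon^n$ and with $\|v\|_{L^q(\omega_\epsilon)}$ controlled for some $q > 1$, the solution of
\[ (\omega_\epsilon + i\partial\bar\partial \psi)^n = v\, \omega_\epsilon^n, \qquad \sup_Y \psi = 0, \]
satisfies $\|\psi\|_{L^\infty(Y)} \leq C$ uniformly in $\epsilon$. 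This is done by rewriting the equation in the fixed K\"ahler class $[\omega_X + \omega_Y]$ with right-hand side $(v\, G_\epsilon)\,\omega_Y^n$, and then applying Ko\l odziej's $L^\infty$ estimate, using the uniform $L^p$ bound on $G_\epsilon = \omega_\epsilon^n/\omega_Y^n$ together with the sup-bound on $u_\epsilon$ to control the relative potential.

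For (a), my plan is to use this auxiliary tool as follows. Given $f \in C^\infty(Y)$ with $\overline{f}=0$, I would apply the estimate above with $v$ proportional to $e^{-s f}$ for a well-chosen $s > 0$, and then compare $\psi$ with $sf$ by running the maximum principle on a suitable barrier combining $sf$, $\psi$, and a term involving $\omega_X$. Iterating this on super-level sets of $f$ in the manner of De Giorgi converts the pointwise bound into an $L^\infty$-to-$L^1$ inequality, from which the Sobolev embedding $\|f - \overline{f}\|_{L^{2q}(\omega_\epsilon)} \leq C\|\nabla f\|_{L^2(\omega_\epsilon)}$ follows by the standard duality and rearrangement argument. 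Once (a) is in hand, parts (b) and (c) are classical consequences: Moser-Nash iteration applied to the heat equation yields the uniform on-diagonal heat kernel bound $H(x,x,t)\leq C(t)$, and the stated bound in (b) then follows by the semigroup property; the eigenvalue lower bound in (c) follows from (a) by the min-max characterization combined with a Faber-Krahn-type argument relating the eigenvalue growth to the Sobolev exponent.

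The main obstacle is the degeneration $\omega_\epsilon \to \pi^*\omega$: no uniform bound is available for the curvature or injectivity radius of $\omega_\epsilon$, so classical Riemannian tools (Li-Yau gradient estimates, Saloff-Coste's theorem, etc.) cannot be invoked directly. Every estimate must ultimately be extracted from only the two tame data of Definition~\ref{defn:tameapprox}(b), and the auxiliary Monge-Amp\`ere equation is precisely the device that converts these $L^p$/$L^\infty$ inputs into the analytic estimates needed. A secondary subtlety is that the allowed range of $q$ in (a) and (c) must be tied to the H\"older conjugate of the exponent $p$ from the tame approximation; tracking this dependence carefully is what gives the sharp exponent $q < \frac{n}{n-1}$ and the matching Weyl-type exponent in (c).
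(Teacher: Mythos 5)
The paper does not prove Theorem~\ref{thm:GPSS}: it is imported as a black box from Guo-Phong-Song-Sturm~\cite{GPSS2} (their Theorems 2.1, 2.2 and Corollary 2.1), with \cite{GPSS3,GT24} invoked to remove a lower-bound hypothesis on the volume form. The entire point of Definition~\ref{defn:tameapprox}(b) is to make the hypotheses of those GPSS results hold with constants independent of $\epsilon$, so that the conclusions apply uniformly to the family $(\omega_\epsilon)$. There is therefore no in-paper argument for you to be compared against; the ``proof'' in this paper is the citation.

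As an independent sketch of the cited GPSS argument, you correctly identify its engine: the Kolodziej/Guo--Phong--Tong $L^\infty$ estimate for the auxiliary complex Monge-Amp\`ere equation $(\omega_\epsilon + i\partial\bar\partial\psi)^n = v\,\omega_\epsilon^n$, made uniform in $\epsilon$ precisely by the two items of Definition~\ref{defn:tameapprox}(b), replacing the curvature/injectivity-radius control that a classical Saloff-Coste or Li-Yau route would need. The derivations (a)~$\Rightarrow$~(b) via Moser/Nash iteration and the semigroup bound $H(x,y,t)\leq H(x,x,t)^{1/2}H(y,y,t)^{1/2}$, and (a)~$\Rightarrow$~(c) via min-max/trace, are standard and are what GPSS do. Where your sketch is genuinely incomplete is the heart of~(a): taking $v\propto e^{-sf}$ and ``comparing $\psi$ with $sf$ by running the maximum principle'' does not close as stated, because $sf$ is not $\omega_\epsilon$-plurisubharmonic, so the comparison principle gives you no direct pointwise relation between $\psi$ and $sf$. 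The actual argument in \cite[Section 2]{GPSS2} is a De Giorgi-style iteration over super-level sets of a truncated test function, with the auxiliary Monge-Amp\`ere solution serving as the barrier at each level; the exponent $q<\tfrac{n}{n-1}$ comes out of optimizing over the truncation parameters. You gesture at this iteration but do not supply the mechanism, so as written the proposal is a plausible outline rather than a proof.
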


Finally, we briefly recall some background on RCD spaces. See Ambrosio-Gigli-Savar\'e~\cite{AGS} and de Philippis-Gigli~\cite{DPG} for detailed definitions. A metric measure space is a triple $(Z, d, \mu)$, where $(Z,d)$ is a metric space, and $\mu$ is a measure on $Z$. We will only deal with \emph{almost smooth} metric measure spaces in the sense of Honda~\cite{Honda} (see also \cite[Definition 5]{Sz24} for the slightly different definition we use). This means that $Z$ has a dense open subset $U$, such that the metric $d$ on $U$ is locally induced by a smooth Riemannian metric $g$, and we have the following conditions:
\begin{itemize}
    \item The restriction of the measure $\mu$ to $U$ coincides with the Riemannian volume measure of $(U, g)$.
    \item The set $Z\setminus U$ has zero capacity in the sense that there is an exhaustion $K_1\subset K_2\subset\ldots$ of $U$ with compact sets, and function $\phi_i$ supported in $U$, equal to 1 on $K_i$, with $\int_U |\nabla \phi_i|^2\, d\mu \to 0$ as $i\to\infty$. 
\end{itemize}

In our setting of a singular K\"ahler metric $\omega$ on the pair $(X,D)$, we define the metric space $\hat{X}$ to be the metric completion of the length metric space $(X^{\text{reg}}\setminus D, \omega)$. We equip $\hat{X}$ with the volume form $\omega^n$, pushed forward from $X^{\text{reg}}\setminus D$. Note that we have $\mathrm{Ric}(\omega)\geq \lambda\omega$, for a constant $\lambda$.
From Honda's work~\cite[Corollary 3.10]{Honda} (see also \cite[Corollary 8]{Sz24}) together with \cite[Lemma 3.7]{Song14} and the Sobolev inequality in Theorem~\ref{thm:GPSS}, we have the following (see \cite[Proposition 9]{Sz24}). 
\begin{proposition}\label{prop:Honda}
  The metric measure space $\hat{X}$ is a non-collapsed $RCD(2n, \lambda)$ space if and only if the eigenfunctions of the Laplacian on $X^{\text{reg}}\setminus D$ are Lipschitz. I.e. if $f\in W^{1,2}$ satisfies $\Delta f = -\mu f$ on $X^{\text{reg}}\setminus D$ for a constant $\mu$, then $|\nabla f|\in L^\infty(X^{\text{reg}}\setminus D)$. 
\end{proposition}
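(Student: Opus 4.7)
The plan is to realize $(\hat{X}, \omega^n)$ as an almost smooth metric measure space with smooth locus $U := X^{\text{reg}}\setminus D$, and then invoke Honda's structure theorem \cite[Corollary 3.10]{Honda}, which characterizes when such spaces are non-collapsed RCD via Lipschitz regularity of eigenfunctions. Once almost smoothness is verified, the equivalence of RCD$(2n,\lambda)$ and Lipschitz eigenfunctions is exactly Honda's criterion, so the real content is checking the two ingredients of the almost smooth structure.

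First I would check the Riemannian/measure structure on $U$. The Kähler metric $\omega$ is smooth on $U$ and its volume form $\omega^n$ pushes forward to the Riemannian volume measure, so the measure compatibility is automatic. The Ricci lower bound $\mathrm{Ric}(\omega)=\lambda\omega \geq \lambda\omega$ on $U$ is immediate from the definition of a singular Kähler--Einstein metric on the klt pair $(X,D)$ recalled in this section. The nontrivial condition is zero capacity of $\hat{X}\setminus U$. To produce the required cutoff functions $\phi_i$ with $\int_U |\nabla\phi_i|^2_\omega\,\omega^n \to 0$, I would pass to a log resolution $\pi:Y\to X$, pick a tame approximation $\omega_\epsilon$ of $\omega$ in the sense of Definition~\ref{defn:tameapprox}, and construct cutoffs $\psi_\delta$ on $Y$ supported away from the union of the exceptional locus and the strict transform of $D$, following \cite[Lemma 3.7]{Song14}. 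The Sobolev inequality in Theorem~\ref{thm:GPSS}(a), combined with the uniform $L^p$ bound of the volume form $\omega_\epsilon^n/\omega_Y^n$, lets one bound $\int_Y |\nabla\psi_\delta|_{\omega_\epsilon}^2\,\omega_\epsilon^n$ independently of $\epsilon$, and makes this bound vanish as $\delta\to 0$. Passing $\epsilon\to 0$ pushes the cutoffs down to $U \subset \hat{X}$ with the required vanishing Dirichlet energy.

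Given almost smoothness with Ricci bound $\lambda$ on $U$ and the global Sobolev inequality, Honda's criterion then gives RCD$(2n,\lambda)$ iff eigenfunctions on $U$ are Lipschitz. The forward direction uses general RCD regularity: on an RCD$(K,N)$ space, Laplacian eigenfunctions admit Lipschitz representatives. For the reverse direction, Lipschitz eigenfunctions form a dense subalgebra of $W^{1,2}$ on which one verifies the Bakry--\'Emery inequality $BE(\lambda,2n)$ directly, using the Bochner formula on $U$ together with the Sobolev inequality to control boundary terms at the singular set; by Ambrosio--Gigli--Savar\'e \cite{AGS}, this yields the RCD condition globally.

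The main obstacle will be the zero-capacity step: the set one must cut off is the union of the exceptional divisor of $\pi$ with the strict transform of $D$, which in general has complicated combinatorial structure, and the cutoffs must have small Dirichlet energy with respect to the \emph{degenerating} family $\omega_\epsilon$ rather than the fixed reference metric $\omega_Y$. The tame approximation framework is exactly what makes this uniform: the $L^p$-control of the Monge--Amp\`ere densities together with the Sobolev estimate of Theorem~\ref{thm:GPSS} ensures Song's cutoff construction goes through with bounds independent of $\epsilon$, so the limit $\epsilon\to 0$ is harmless.
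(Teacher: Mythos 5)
Your proposal follows essentially the same route as the paper: realize $(\hat{X},\omega^n)$ as an almost smooth metric measure space in Honda's sense with smooth locus $U = X^{\text{reg}}\setminus D$, verify the three pieces (Riemannian structure and measure compatibility on $U$, zero capacity of the complement, the global Sobolev inequality), and then invoke \cite[Corollary 3.10]{Honda} directly; this is exactly what the paper does by citing Honda, \cite[Lemma 3.7]{Song14}, and Theorem~\ref{thm:GPSS}.

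Two small deviations worth flagging. First, the zero-capacity step does not actually require the tame approximation machinery: Song's cutoff construction applies directly to $(X^{\text{reg}}\setminus D,\omega)$ (equivalently to $(Y,\pi^*\omega)$ on the resolution), using only that $\omega$ has globally bounded potential and that $\omega^n$ has $L^p$-density with respect to $\omega_X^n$; there is no need to construct cutoffs for the family $\omega_\epsilon$ and then pass to the limit $\epsilon\to 0$, and doing so introduces avoidable issues (e.g.\ that a single family $\psi_\delta$ must work uniformly for all $\epsilon$). Second, the Sobolev inequality is not what bounds the Dirichlet energy of the cutoffs — that comes from the algebro-geometric structure of the exceptional divisors (the intersection-number computation in Song's lemma); the Sobolev inequality is used separately as a hypothesis for Honda's criterion, ensuring the spectral theory and the Bakry--\'Emery argument go through on the completed space. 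Neither point is a gap in the overall logic, just a slight misattribution of which hypothesis is doing which job.
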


\section{Tame approximation with $L^1$ control of Ricci}\label{sec:KE}
In this section we prove Theorem~\ref{thm:generalKcurrent}. The method is similar to that in \cite[Theorem 16]{Sz24}, except we use a tame approximation of the singular K\"ahler metric $\omega$ with metrics $\omega_\epsilon$ on a resolution whose Ricci curvature is controlled in $L^1$ rather than with constant scalar curvature metrics. More precisely we have the following. 

\begin{proposition}\label{prop:L1approx}
    Suppose that $\omega = \omega_X + \ddbar u$ is a singular K\"ahler  metric on the normal K\"ahler space $(X,\omega_X)$, satisfying the conditions (1), (2), (3) in Theorem~\ref{thm:generalKcurrent}. We can then find a tame approximation of $\omega$ with metrics $\omega_\epsilon$ on a resolution $Y$, such that 
    \[ \Vert (\mathrm{Ric}(\omega_\epsilon) + 2A(\omega_{\epsilon}+\pi^*\omega_X))_-\Vert_{L^1(\omega_\epsilon)} \to 0\]
    as $\epsilon \to 0$. 
\end{proposition}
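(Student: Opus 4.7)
The hypothesis $\mathrm{Ric}(\omega)\geq -A(\omega+\omega_X)$, together with $\omega=\omega_X+\ddbar u$ and the identity $\mathrm{Ric}(\omega)=\mathrm{Ric}(\omega_X)-\ddbar F$ on $X^{\text{reg}}$, is equivalent to the current inequality
\[
\ddbar(F-Au) \leq \mathrm{Ric}(\omega_X)+2A\omega_X =: \theta,
\]
where $\theta$ is a smooth $(1,1)$-form on $X$. Thus $H:=F-Au$ is quasi-psh from above, and after extension across the analytic subset $\mathrm{Sing}(X)\cup D$ its negative $-H$ defines a $\theta$-psh function on $X$ (modulo pluripolar sets which carry no mass for $\omega^n$). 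The strategy is to construct $\omega_\epsilon$ as solutions of a twisted Monge-Amp\`ere equation on the resolution $Y$ whose right-hand side is built from a Demailly regularization of $H\circ\pi$ and in which the coupling to the unknown $v_\epsilon$ converts the bound on $\ddbar H$ into the desired Ricci bound.

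Let $\pi:Y\to X$ be a resolution as in Definition~\ref{defn:tameapprox}. Apply Demailly regularization on $Y$ to obtain smooth functions $H_\epsilon$ with $\ddbar H_\epsilon \leq \pi^*\theta + \lambda_\epsilon\omega_Y$ for $\lambda_\epsilon\to 0$ and $H_\epsilon\to H\circ\pi$ in $L^1$. Write $\pi^*\omega_X^n=J\omega_Y^n$ with $J$ smooth non-negative vanishing on the exceptional divisors, and set $J_\delta:=J+\delta$ for a small parameter $\delta=\delta(\epsilon)$. Then solve the Monge-Amp\`ere equation
\[
(\pi^*\omega_X + \epsilon\omega_Y + \ddbar v_\epsilon)^n = c_\epsilon\, e^{H_\epsilon + Av_\epsilon}\, J_\delta\, \omega_Y^n,
\]
which, because the coefficient of $v_\epsilon$ in the exponent is positive, admits a unique smooth bounded solution $v_\epsilon$ by Aubin's theorem. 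Kolodziej-type estimates give a uniform $L^\infty$-bound on $v_\epsilon$ and an $L^p$-bound on $\omega_\epsilon^n/\omega_Y^n$ (with $\delta(\epsilon)$ chosen small enough that $J_\delta$ absorbs the contribution of $e^{H_\epsilon}$ near the exceptional divisors, using the integrability $e^F\in L^p$), and standard arguments yield smooth convergence $v_\epsilon\to u\circ\pi$ on $\pi^{-1}(X^{\text{reg}}\setminus D)$. Hence $\omega_\epsilon:=\pi^*\omega_X+\epsilon\omega_Y+\ddbar v_\epsilon$ forms a tame approximation of $\omega$.

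Taking $-\ddbar\log$ of the Monge-Amp\`ere equation, using $\ddbar v_\epsilon=\omega_\epsilon-\pi^*\omega_X-\epsilon\omega_Y$ to absorb the coupling, and applying the Demailly bound $\ddbar H_\epsilon\leq\pi^*\theta+\lambda_\epsilon\omega_Y$, a direct computation yields
\[
\mathrm{Ric}(\omega_\epsilon) + 2A(\omega_\epsilon+\pi^*\omega_X) \geq A\omega_\epsilon + A\pi^*\omega_X + (A\epsilon-\lambda_\epsilon)\omega_Y + \sigma_\delta,
\]
where $\sigma_\delta:=\mathrm{Ric}(\omega_Y)-\pi^*\mathrm{Ric}(\omega_X)-\ddbar\log J_\delta$ is a smooth $(1,1)$-form vanishing pointwise on $\pi^{-1}(X^{\text{reg}}\setminus D)$ as $\delta\to 0$, since $\ddbar\log J=\mathrm{Ric}(\omega_Y)-\pi^*\mathrm{Ric}(\omega_X)$ on the regular locus. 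Provided $\lambda_\epsilon<A\epsilon$, the first three terms on the right are non-negative, hence $(\mathrm{Ric}(\omega_\epsilon)+2A(\omega_\epsilon+\pi^*\omega_X))_-\leq \sigma_\delta^-$. The main analytic step is then to verify
\[
\Vert\sigma_\delta^-\Vert_{L^1(\omega_\epsilon)} \to 0
\]
for a suitable choice $\delta=\delta(\epsilon)$: on compact subsets of the regular locus this follows from dominated convergence, while over a small neighborhood of the exceptional divisors one exploits that $\omega_\epsilon^n\to\pi^*\omega^n$ places no mass on pluripolar sets together with the uniform $L^p$-bound on $\omega_\epsilon^n/\omega_Y^n$. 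Carefully balancing $\delta(\epsilon)$ against the shrinking speed of this neighborhood so that the smallness holds uniformly in $\epsilon$ is the principal technical hurdle.
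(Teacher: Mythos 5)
Your strategy is recognizably the same as the paper's: solve a family of Monge--Amp\`ere equations on a resolution $\pi:Y\to X$ of the form $(\pi^*\omega_X+\epsilon\omega_Y+\ddbar v_\epsilon)^n = e^{Av_\epsilon + (\text{data})}\omega_Y^n$, and then split $\mathrm{Ric}(\omega_\epsilon)+2A(\omega_\epsilon+\pi^*\omega_X)$ into a nonnegative piece plus an error concentrated near $D'\cup E$ whose $L^1(\omega_\epsilon)$-norm tends to $0$. The two differences are in how the quasi-psh datum (your $H=F-Au$, the paper's $G=Au-F=-H$) is regularized, and in how the Jacobian $J=e^g$ of $\pi$ is desingularized ($J+\delta$ versus a quasi-psh smoothing of $g=\log J$). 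Both of these changes introduce gaps.

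The first, and more serious, gap is the assertion that Demailly regularization produces smooth $H_\epsilon$ with $\ddbar H_\epsilon\le\pi^*\theta+\lambda_\epsilon\omega_Y$ for a \emph{constant} $\lambda_\epsilon\to 0$. Demailly's theorem only gives such a vanishing loss when the quasi-psh function has zero Lelong numbers; otherwise the loss term is of the form $C\,\nu_\epsilon(x)\,\omega_Y$ with $\nu_\epsilon(x)\to\nu(-H\circ\pi,x)$, which does not vanish along the unbounded locus. Here $-H=Au-F$ does carry positive Lelong numbers: for instance near a component of $D$ with coefficient $c\in(0,1)$ one has $e^F\sim|z|^{-2c}$, so $-F\sim 2c\log|z|$ has Lelong number $2c>0$ along $D'$. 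Consequently the inequality ``provided $\lambda_\epsilon<A\epsilon$'' can never be arranged, and the claimed pointwise positivity of $\mathrm{Ric}(\omega_\epsilon)+2A(\omega_\epsilon+\pi^*\omega_X)-\sigma_\delta$ fails near $D'\cup E$. The paper circumvents exactly this by first truncating, $\max(G,-\epsilon^{-1})$, which destroys the Lelong numbers, and then applying the Blocki--Kolodziej smoothing, accepting a fixed loss $-2C_1\omega_Y$ rather than a vanishing one; smallness is recovered only at the $L^1$ level because the bad set shrinks.

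The second gap is that the remaining error term $\sigma_\delta$ is stated to converge to zero in $L^1(\omega_\epsilon)$ but this is explicitly left as ``the principal technical hurdle.'' That estimate is the heart of the proposition and cannot be omitted. The paper proves the analogous estimate for its $\beta_\epsilon=\ddbar g_\epsilon+\sum_i a_i\Theta_{h_i}$ by exploiting a cohomological identity: $\beta_\epsilon$ represents the class $\sum_i a_i[E_i]$, so $\int_Y\beta_\epsilon\wedge\omega_\epsilon^{n-1}=n\sum_i a_i[E_i]\cdot[\pi^*\omega_X+\epsilon\omega_Y]^{n-1}\to 0$, and this intersection-theoretic vanishing is what makes the $L^1$ bound go through. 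Your $\sigma_\delta=\ddbar\log\frac{J}{J+\delta}$ does not obviously carry such a cohomological structure (indeed $\ddbar\log(J+\delta)$ need not represent a fixed class as $\delta\to 0$), so the corresponding estimate would need a different mechanism, and none is supplied. In short, you have identified the right object to control but have not controlled it, and the regularization step you use to reduce to that object is itself not valid in the presence of Lelong numbers.
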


 The approximations that we use are analogous to the ones used by Eyssidieux-Guedj-Zeriahi~\cite{EGZ09} in constructing singular K\"ahler-Einstein metrics. We remark that Guenancia~\cite[Proposition 3.9]{Gue16}
 also implicitly used $L^1$ bounds for the Ricci curvature of such approximations (see also Druel-Guenancia-Paun~\cite{DGP} and Greb-Guenancia-Kebekus~\cite{GGK}). 

 \begin{proof}
 	Let $\pi : Y \to X$ be a resolution of $X$, admitting a K\"ahler
  metric $\omega_Y$. Such a resolution exists by \cite[Lemma 2.2]{CMM}.  We will construct a family of metrics
  \[ \omega_\epsilon = \pi^*\omega_X + \epsilon \omega_Y + \ddbar
  u_\epsilon \]
  on $Y$, that approximate $\omega$ as $\epsilon\to 0$, and such that
  we have good control of the negative part of
  $\mathrm{Ric}(\omega_\epsilon) + 2A(\omega_\epsilon+\pi^*\omega_X)$
  in $L^1$.  To simplify notation, we will often write $\omega_X$ for $\pi^*\omega_X$ below on $Y\setminus E$, where $E$ is the exceptional divisor.  
  
  The equation for $\omega = \omega_X + \ddbar u$ on $X$ is
  \[ (\omega_X + \ddbar u)^n = e^F \omega_X^n. \]
  The Ricci curvature bound shows that on $X^{\text{reg}}$ we have
  \begin{equation} \label{eq:41} \ddbar(-F) + \mathrm{Ric}(\omega_X) \geq - A(\omega +
  	\omega_X).
  \end{equation}
  Since $\omega_X$ is locally the restriction of a smooth metric under
  an embedding into $\mathbb{C}^N$, it follows that for a constant $C$
  we have $\mathrm{Ric}(\omega_X) \leq C\omega_X$ on $X^{\text{reg}}$. At the
  same time $\omega$ and $\omega_X$ has locally bounded potential. It
  follows that any point $x\in X$ has a neighborhood $U$ and a bounded
  function $v$ on $U$ such that on
  $U\cap X^{\text{reg}}$ we have 
  \[ \ddbar(v-F) \geq 0. \]
  It follows from this that $v-F$ has a psh extension to $U$, and so it is bounded above in a neighborhood of $x$ (see Grauert-Remmert~\cite{GR56}). Therefore $-F$ is bounded above.
  
  Also, on $Y$ the form $\pi^*\omega_X$
  is locally given by pulling back a smooth metric along
  a holomorphic map to $\mathbb{C}^N$. It follows that away from the
  exceptional set on $Y$ we have $\pi^*\omega_X^n = e^g \omega_Y^n$,
  where
  \begin{equation} \label{eq:gdefn}
  g = \sum_i a_i \log |s_i|^2_{h_i}, 
  \end{equation}
  in terms of defining sections $s_i$ of the line bundles $\mathcal{O}(E_i)$
  corresponding to the exceptional divisors, and smooth Hermitian
  metrics $h_i$. Note that all $a_i > 0$ here. It follows that on
  $Y\setminus E$ we have
  \[ \mathrm{Ric}(\omega_X) = -\ddbar g + \mathrm{Ric}(\omega_Y) =
  \mathrm{Ric}(\omega_Y) + \sum_i a_i \Theta_{h_i}. \]
   From \eqref{eq:41} we then have
  \begin{equation}\label{eq:ddbG10} \ddbar(Au-F) + \mathrm{Ric}(\omega_Y) + \sum_i a_i\Theta_{h_i}
  \geq -2A\omega_X \end{equation}
  on $Y\setminus E$, and the upper bound on $-F$ implies that this inequality extends across the exceptional set $E$.

  Now we construct the tame approximation $\omega_{\epsilon} = \pi^{\star}\omega_X+\epsilon\omega_Y+\ddbar u_{\epsilon}$ by solving the equation
  \begin{equation}\label{eq: tame-approx-eqn}
  	(\pi^{\star}\omega_X+\epsilon\omega_Y+\ddbar u_{\epsilon})^n = e^{Au_{\epsilon}-G_{\epsilon}+g_{\epsilon}}\omega_Y^n
  \end{equation}
	where $G_{\epsilon}$ is a regularization of $Au-F$ and $g_{\epsilon}$ is a regularization of $g$ to be defined as follows. We write $G = Au-F$, and let $G_\epsilon$ be a smoothing of $\max(G, -\epsilon^{-1})$ using \cite{Blocki-Kol}. Note that we have
	\[ \begin{aligned}
      \ddbar (G) + \mathrm{Ric}(\omega_Y) + \sum_i a_i
      \Theta_{h_i} &\geq -C_1
      \omega_Y, \\
       \ddbar (-\epsilon^{-1}) + \mathrm{Ric}(\omega_Y) + \sum_i a_i
      \Theta_{h_i} &\geq -C_1 \omega_Y, \end{aligned} \]
    so it follows that
	\[
    \ddbar \max\{G, -\epsilon^{-1}\} +  \mathrm{Ric}(\omega_Y) + \sum_i a_i
      \Theta_{h_i} \geq - C_1\omega_Y.
    \]
    We can therefore construct the smoothing $G_\epsilon$ in such a way that is satisfies
  \begin{equation}\label{eq:Gpsh} \ddbar G_\epsilon +  \mathrm{Ric}(\omega_Y) + \sum_i a_i
      \Theta_{h_i} \geq - 2C_1\omega_Y,
    \end{equation}    
    $G_\epsilon\searrow  G$ locally smoothly on $\pi^{-1}(X^{reg}\setminus D)$, and 
 $G_\epsilon \geq \max\{G, -\epsilon^{-1}\}$.

    Similarly, we define $g_\epsilon$ to be a smoothing of $\max\{g, -\epsilon^{-1}\}$, satisfying the properties
    \begin{itemize}
        \item $\displaystyle{ \ddbar g_\epsilon \geq -C_1\omega_Y}$, 
        \item $g_\epsilon \searrow g$ locally smoothly on $\pi^{-1}(X^{reg}\setminus D)$, 
        \item $g_\epsilon \leq \max\{g, -\epsilon^{-1}\} +1$. 
    \end{itemize}

  Now we show this is a tame approximation.
 First we check the uniform $L^p$ bound for $e^{-G_\epsilon+g_\epsilon}$. To see this, note that
  by the boundedness of $u$, we have
  \[\int_Y e^{p(-G)}\, \omega_Y^n = \int_Y e^{p(F-Au)}\, \omega_Y^n\leq C\int_Ye^{pF}\,\omega_Y^n <\infty \]
  By the properties of $G_\epsilon, g_\epsilon$ above, we have $-G_{\epsilon}\leq -G$ and $g_{\epsilon}\leq C$, it follows that on $Y$ we have 
  \[ \int_Y e^{p(-G_\epsilon+g_\epsilon)}\,\omega_Y^n\leq C\int_Y e^{-pG}\omega_Y^n < C, \]
  for a constant $C$ independent of $\epsilon$. 
  It follows in particular from \cite[Theorem 4.1]{EGZ09} that we have a
  uniform $L^\infty$ bound $|u_\epsilon| < C$.

Since the $u_\epsilon$ are $\omega_X + \omega_Y$-psh, and uniformly bounded, so up
to choosing a subsequence we have a limit $u_\infty \in L^\infty(Y)$ such that
$e^{u_\epsilon} \to e^{u_\infty}$ in $L^q$ for all $q$. We now show that $u_\infty$ is continuous on $Y\setminus E$ and solves the Monge-Amp\`ere equation 
\[ (\pi^*\omega_X + \ddbar u_\infty)^n = e^{Au_\infty -G +
    g}\omega_Y^n = e^{Au_\infty + F - Au}\omega_X^n. \]
It suffices to show that $u_\epsilon$ converges locally uniformly in $Y\setminus E$ to $u_\infty$.  

We shall follow the argument in the proof of \cite[Theorem A]{GL}. First for any $\delta > \epsilon$, we have 
   \[
(\pi^*\omega_X +\delta\omega_Y + \ddbar u_{\epsilon})^n \geq  (\pi^* \omega_X +\epsilon\omega_Y + \ddbar u_{\epsilon})^n  =  e^{Au_\epsilon - G_\epsilon+g_\epsilon } \omega_Y^n.    
   \]
 By the stability estimates in \cite[Theorem 3.3]{GL}, we get 
\begin{equation}\label{eq_est_1}
u_{\epsilon}\leq u_\delta+ C\Vert  e^{-G_\delta+g_\delta} - e^{-G_\epsilon+g_\epsilon}  \Vert_{L^p(\omega_Y^n)}^{1/n}.
\end{equation} 
Since  $ \omega_Y = \pi^{\ast} \omega_X - \sum_{i=1}^k b_i \Theta_{h_i}$
for some $b_i >0$ and  smooth hermitian metrics $h_i$ (cf. \cite[Lemma 2.2]{CMM}), we can take $\rho= \sum_i b_i \log |s_i|^2_{h_i}-C'$  such that $  \pi^*\omega_X+ \ddbar \rho \geq \omega_Y $,  $\sup_Y \rho=0$, and $\{\rho=-\infty\} = E$. We consider $v_\delta:= (1-\delta )u_\delta + \delta \rho - C_1 \delta$ with $\|u_\delta \|_{L^\infty} +nA^{-1}\leq  C_1$. Then  we have
 \begin{align*}
 \pi^* \omega_X &+ \epsilon\omega_Y+ \ddbar v_\delta = \pi^* \omega_X + \epsilon\omega_Y + \delta \ddbar  \rho + (1-\delta) \ddbar u_\delta\\
&  \geq \delta ( \pi^*  \omega_X + \ddbar \rho  ) + (1-\delta)( \pi^* \omega_X + \delta\omega_Y + \ddbar u_\delta) - \delta\omega_Y\\
&\geq \delta\omega_Y + (1-\delta)( \pi^* \omega_X + \delta\omega_Y + \ddbar u_\delta) -\delta\omega _Y \\
&=   (1-\delta)( \pi^* \omega_X + \delta\omega_Y + \ddbar u_\delta) 
 \end{align*}
hence
\[
\begin{aligned} 
(\omega_X + \epsilon\omega_Y+ \ddbar v_\delta)^n &\geq  (1-\delta)^n (\pi^* \omega_X + \delta\omega_Y +\ddbar u_\delta)^n \\
&= (1-\delta)^n e^{A u_{\delta} -G_\delta+g_\delta }\omega_Y^n \\
&= (1-\delta)^n e^{A (1-\delta)u_{\delta} + A\delta u_\delta -G_\delta+g_\delta }\omega_Y^n \\
&=  e^{Av_\delta + A\delta u_\delta - A\delta\rho + AC_1\delta + n\log(1-\delta) - G_\delta + g_\delta} \omega_Y^n\\
&\geq   e^{Av_\delta  - G_\delta + g_\delta} \omega_Y^n,
\end{aligned}
\]
where we used the bound $\|u_\delta \|_{L^\infty}+nA^{-1}\leq  C_1 $ and $\rho\leq 0$ in the last inequality. 

Now the function $v_{ \delta, \epsilon}:= \max(v_\delta, u_\epsilon)$ is $(\pi^*\omega_X+ \epsilon\omega_Y)$-psh and continuous on $Y$ since $v_\delta=-\infty$ on $E$. By \cite[Lemma 1.10]{GP}, we have
\[
( \pi^*\omega_X+ \epsilon \omega_Y +\ddbar v_{\delta, \epsilon})^n\geq e^{A v_{\delta,\epsilon}  }\min(e^{-G_\delta+g_\delta}, e^{-G_\epsilon+g_\epsilon}) \omega_Y^n.
\]
Applying   \cite[Theorem 3.3]{GL} again we get 
$$v_{\delta,\epsilon} \leq u_\epsilon + C\Vert \min(e^{-G_\delta+g_\delta}, e^{-G_\epsilon+g_\epsilon}) - e^{-G_\epsilon+g_\epsilon}  \Vert^{1/n}_{L^p(\omega_Y^n)}\leq u_\epsilon + C\Vert e^{-G_\delta+g_\delta}- e^{-G_\epsilon+g_\epsilon}  \Vert^{1/n}_{L^p(\omega_Y^n)}  $$
hence 
\begin{equation*}
u_\delta\leq u_\epsilon  -\delta\rho +2\delta C_1 + C\Vert   e^{-G_\delta+g_\delta} - e^{-G_\epsilon+g_\epsilon}  \Vert_{L^p(\omega_Y^n)}^{1/n}
\end{equation*}
since $v_\delta\leq v_{\delta,\epsilon}$ and $\|u_\delta\|_{L^\infty}\leq C_1$.  Combining this with \eqref{eq_est_1} it follows that $u_\epsilon$ converges locally uniformly on $Y\setminus E$ to $u_\infty$, and so $u_\infty$ solves the equation 
  \[ (\pi^*\omega_X + \ddbar u_{\infty})^n = e^{Au_{\infty} - G +  g}\omega_Y^n.\]
Since $u$ also solves the same equation, by uniqueness we have $u_\infty = u$ and so $u_\epsilon$ converges locally uniformly in $Y\setminus E$ to $u$. Now we can use the assumption that $u$ itself is
  smooth on $\pi^{-1}(X^{\text{reg}}\setminus D)$, together with Savin's
  small perturbation result~\cite{Savin}, to deduce that on any compact set away
  from $\pi^{-1}(X^{\text{reg}}\setminus D)$ we have smooth convergence
  $u_\epsilon \to u$.

  Now we will estimate the Ricci curvature of $\omega_{\epsilon}$. We have
  \[ \begin{aligned} \mathrm{Ric}(\omega_\epsilon) &= -A\ddbar u_\epsilon+ \ddbar
      G_\epsilon- \ddbar g_\epsilon+ \mathrm{Ric}(\omega_Y) \\
      &= -A\omega_\epsilon + A\omega_X + A\epsilon\omega_Y + \ddbar G_\epsilon -
      \ddbar g_\epsilon +\mathrm{Ric}(\omega_Y)  \\
      &\geq -A\omega_\epsilon-2A\omega_X + \alpha_\epsilon - \beta_\epsilon \end{aligned} \]
  on $Y$, where
  \[ \begin{aligned}
      \alpha_\epsilon &= \ddbar G_\epsilon + \mathrm{Ric}(\omega_Y)+\sum_i a_i\Theta_{h_i}+3A\omega_X \\
      \beta_\epsilon&=\ddbar g_{\epsilon}+\sum_i a_i\Theta_{h_i} , 
    \end{aligned}\]
    so that $  \beta_\epsilon = \ddbar(g_\epsilon - g)$ on $Y\setminus E$.

  From the estimates \eqref{eq:ddbG10}, \eqref{eq:Gpsh} and the local smooth convergence of $G_\epsilon$ to $G$ away from $\pi^{-1}(D)\cup E$, we have the following. For any $\kappa > 0$, we can choose a sufficiently small $\epsilon>0$, such if we denote by $N_\kappa$ the
  $\kappa$-neighborhood of $\pi^{-1}(D)\cup E$, then we have
  \begin{equation} \label{eq:Gklower}
    \alpha_\epsilon = \ddbar(G_\epsilon) +  \mathrm{Ric}(\omega_Y)+\sum_i a_i\Theta_{h_i} + 3A\omega_X
    \geq \begin{cases} 0, & \text{ on } Y\setminus
      N_\kappa, \\
      -C_2\omega_Y, &\text{ on }N_\kappa. \end{cases}. 
  \end{equation}
  
  Similarly, we have $\ddbar g_{\epsilon}\geq -C\omega_Y$, and at the same time we have smooth convergence $g_\epsilon \to g$ on compact sets away from $E$. Therefore $\beta_\epsilon$ satisfies a similar estimate to $\alpha_\epsilon$ above: for any $\kappa > 0$ we can choose $\epsilon > 0$ small enough so that 
  \[ 
  \beta_\epsilon 
    \geq \begin{cases} -\kappa\omega_Y, & \text{ on } Y\setminus
      N_\kappa, \\
      -C_2\omega_Y, &\text{ on }N_\kappa. \end{cases}. 
  \]

Let us supress the index $\epsilon$ for now, and write $\alpha=\alpha_\epsilon, \beta=\beta_\epsilon$. 
Write $|\alpha_-|, |\beta_+|$ for the norms of the negative and positive parts of $\alpha, \beta$, with respect to $\omega_\epsilon$. We have $|\alpha_-|= 0$ outside of $N_\kappa(D'\cup E)$, and $|\alpha_-| \leq C\mathrm{tr}_{\omega_\epsilon}\omega_Y$ on all of $Y$. This implies that 
\[ \int_Y |\alpha_-|\, \omega_\epsilon^n \leq C n \int_{N_{\kappa}(D'\cup E)} \omega_Y\wedge \omega_\epsilon^{n-1}. \]
which goes to $0$ as $\kappa\to 0$ because of the uniform $L^\infty$ bound for the potentials of $\omega_\epsilon$. 

On the other hand we have $\beta \geq -\kappa\omega_Y$ outside of $N_\kappa(D'\cup E)$, and $\beta\geq -C\omega_Y$ inside $N_\kappa(D'\cup E)$, hence
\[ \begin{aligned} 
\int_Y |\beta_+|\, \omega_\epsilon^n & \leq \int_{Y\setminus N_\kappa}|\beta + \kappa\omega_Y|\, \omega_\epsilon^n +  \int_{N_\kappa}|\beta + C\omega_Y |\,\omega_{\epsilon}^n \\
&= n\int_{Y\setminus N_\kappa} (\beta + \kappa\omega_Y)\wedge \omega_{\epsilon}^{n-1} + 
n\int_{N_\kappa} (\beta + C\omega_Y)\wedge \omega_\epsilon^{n-1} \\
&\leq n\int_Y \beta\wedge \omega_\epsilon^{n-1} 
+ n\kappa\int_Y \omega_Y\wedge \omega_\epsilon^{n-1} + Cn\int_{N_\kappa} \omega_Y\wedge \omega_\epsilon^{n-1} \\
&= n \sum_{i} a_i [E_i]\cdot [\pi^{\star}\omega_X+\epsilon\omega_Y]^{n-1} +  n\kappa\int_Y \omega_Y\wedge \omega_\epsilon^{n-1} + Cn\int_{N_\kappa} \omega_Y\wedge \omega_\epsilon^{n-1}
\end{aligned}\]
where $E = \sum_i a_i E_i$ are exceptional divisors, as in \eqref{eq:gdefn}. 

If we now let $\epsilon\to 0$, so that also $\kappa\to 0$, then we see that
\[ \lim_{\epsilon\to 0} \int_Y (|\alpha_-| + |\beta_+|) \omega_\epsilon^n = 0. \]
Since we have $\mathrm{Ric}(\omega_\epsilon) + 2(\omega_\epsilon+\omega_X) \geq \alpha_\epsilon - \beta_\epsilon$, it follows that 
$\Vert (\mathrm{Ric}(\omega_\epsilon) + 2A(\omega_\epsilon + \omega_X))_-\Vert_{L^1}\to 0$.
This completes the proof of Proposition~\ref{prop:L1approx}.
\end{proof}

We can now follow the strategy of \cite[Theorem 16]{Sz24} to prove Theorem~\ref{thm:generalKcurrent}, which we restate here. 
\begin{theorem}\label{thm:Kcurrentproof} Suppose that $\omega = \omega_X + \ddbar u$ is a singular K\"ahler  metric on the normal K\"ahler variety $(X,\omega_X)$, satisfying the conditions (1), (2), (3) in Theorem~\ref{thm:generalKcurrent}. Then there exists a constant $c>0$ such that $\omega\geq c \omega_X$. 
\end{theorem}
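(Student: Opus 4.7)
The plan is to follow closely the strategy of \cite[Theorem 16]{Sz24}, replacing the constant scalar curvature approximation used there with the tame approximation supplied by Proposition~\ref{prop:L1approx}. The cscK hypothesis in \cite{Sz24} is substituted by $L^1$-smallness of the negative part of $\mathrm{Ric}(\omega_\epsilon) + 2A(\omega_\epsilon + \pi^*\omega_X)$, which is exactly what Proposition~\ref{prop:L1approx} delivers.

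First, I would invoke Proposition~\ref{prop:L1approx} to obtain a tame approximation $\omega_\epsilon = \pi^*\omega_X + \epsilon\omega_Y + \ddbar u_\epsilon$ on a resolution $\pi: Y \to X$ with
\[ \eta_\epsilon := \Vert (\mathrm{Ric}(\omega_\epsilon) + 2A(\omega_\epsilon + \pi^*\omega_X))_-\Vert_{L^1(\omega_\epsilon)} \to 0, \]
uniformly bounded potentials $u_\epsilon$, and uniform $L^p$ control of the volume form ratio. By Theorem~\ref{thm:GPSS}, the family $\{\omega_\epsilon\}$ then enjoys uniform Sobolev inequalities, heat kernel upper bounds, and spectral gaps.

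Second, I would derive a differential inequality for the standard quantity
\[ H_\epsilon := \log \mathrm{tr}_{\omega_\epsilon}(\pi^*\omega_X) - B u_\epsilon, \]
where $B$ is taken larger than twice the bisectional curvature upper bound of $\omega_X$ (which is finite since $\omega_X$ is locally induced from a smooth metric on $\mathbb{C}^N$). The Chern-Lu inequality bounds $\Delta_{\omega_\epsilon}\log\mathrm{tr}_{\omega_\epsilon}(\pi^*\omega_X)$ from below by $-K\mathrm{tr}_{\omega_\epsilon}(\pi^*\omega_X)$ plus a Ricci contribution; writing $\mathrm{Ric}(\omega_\epsilon) \geq -2A(\omega_\epsilon + \pi^*\omega_X) - r_\epsilon$ with $r_\epsilon \geq 0$ and $\Vert r_\epsilon\Vert_{L^1(\omega_\epsilon)} = \eta_\epsilon$, and using $\Delta_{\omega_\epsilon}u_\epsilon = n - \mathrm{tr}_{\omega_\epsilon}(\pi^*\omega_X + \epsilon\omega_Y)$, one arrives at
\[ \Delta_{\omega_\epsilon} H_\epsilon \geq c_1 e^{H_\epsilon} - C_0 - f_\epsilon, \]
with $c_1, C_0 > 0$ uniform (exploiting the uniform bound on $u_\epsilon$) and $\Vert f_\epsilon\Vert_{L^1(\omega_\epsilon)} \to 0$.

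Third, the heart of the proof is to convert this differential inequality into a uniform $L^\infty$ bound $\sup_Y H_\epsilon \leq C$ via the heat-kernel / Moser iteration machinery of \cite{GPSS2}. Testing against the heat semigroup using the uniform heat kernel bound and Sobolev inequality of Theorem~\ref{thm:GPSS}, the exponential nonlinearity $c_1 e^{H_\epsilon}$ provides the needed rigidity for the iteration to close, provided the error term $f_\epsilon$ can be absorbed. This is the main obstacle: since $f_\epsilon$ is controlled only in $L^1$ rather than in a better $L^p$-space, one must exploit both $\Vert f_\epsilon\Vert_{L^1}\to 0$ and the heat kernel smoothing carefully so that the constants emerging from the iteration stay uniform as $\epsilon\to 0$. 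Once this is achieved, the uniform bound $H_\epsilon \leq C$ gives $\omega_\epsilon \geq C^{-1}\pi^*\omega_X$ on all of $Y$; letting $\epsilon\to 0$ and using the local smooth convergence $\omega_\epsilon \to \pi^*\omega$ on $\pi^{-1}(X^{\text{reg}}\setminus D)$ yields $\omega \geq C^{-1}\omega_X$ on $X^{\text{reg}}\setminus D$, which extends across the proper analytic subset $X\setminus(X^{\text{reg}}\setminus D)$ by the usual extension of currents with bounded potentials.
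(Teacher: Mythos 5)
Your set-up is the same as the paper's: invoke Proposition~\ref{prop:L1approx}, run Chern--Lu on the map $\pi\colon (Y,\omega_\epsilon)\to(X,\omega_X)$, and form a linear combination of $\log\mathrm{tr}_{\omega_\epsilon}(\pi^*\omega_X)$ and $u_\epsilon$ whose Laplacian dominates the negative part of $\mathrm{Ric}(\omega_\epsilon)+2A(\omega_\epsilon+\pi^*\omega_X)$. That part is right. The gap is entirely in your step three, which you correctly flag as ``the main obstacle'': you do not actually supply the mechanism that converts the $L^1$-smallness of the error term into a uniform pointwise bound, and the mechanism you gesture at (Moser iteration leaning on the exponential nonlinearity $c_1 e^{H_\epsilon}$) is both unnecessary and not obviously workable. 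Moser-type iteration on $\Delta\Phi\geq c_1 e^\Phi - C_0 - f_\epsilon$ generically requires the source $f_\epsilon$ to lie in $L^q$ for $q$ above the Sobolev threshold, not merely in $L^1$, and the exponential term $c_1e^{H_\epsilon}$ plays no role in the paper's proof: it is simply discarded (kept only as a non-negative drop term), producing the clean inequality $\Delta_{\omega_\epsilon}\Phi\geq -|(\mathrm{Ric}(\omega_\epsilon)+2A(\omega_\epsilon+\omega_X))_-|-C_2$ with no trace term on the right.

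The point you are missing is how the $L^1$ control interacts with the heat kernel bound of Theorem~\ref{thm:GPSS}(b). For any fixed $t_0>0$ one has $H(x,y,t)\leq C(t_0)$ for all $t\in[t_0,2]$, so
\[
\int_Y |(\mathrm{Ric}(\omega_\epsilon)+2A(\omega_\epsilon+\omega_X))_-|(y)\,H(x,y,t)\,\omega_\epsilon^n \;\leq\; C(t_0)\,\eta_\epsilon \;\longrightarrow\; 0
\]
uniformly on $[t_0,2]$ as $\epsilon\to 0$. Therefore integrating $\partial_t\int_Y\Phi\,H(x,\cdot,t)\geq -\int_Y(|(\cdots)_-|+C_2)H(x,\cdot,t)$ from $t=t_0$ to $t=1$ gives, once $\epsilon<\epsilon(t_0)$,
\[
\int_Y\Phi(y)H(x,y,t_0)\,\omega_\epsilon^n \;\leq\; 2C_2 + \int_Y\Phi(y)H(x,y,1)\,\omega_\epsilon^n,
\]
and the last integral is bounded uniformly in $\epsilon$ using $H(x,\cdot,1)\leq C(1)$ together with the uniform bound on $\int_Y\Phi\,\omega_\epsilon^n$ (which follows from the uniform $L^p$ bound on $\omega_\epsilon^n/\omega_Y^n$ and the uniform bound on $u_\epsilon$). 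So the bound $\int_Y\Phi\,H(x,\cdot,t_0)\leq C_3$ is uniform in $(\epsilon,t_0)$, and one first lets $\epsilon\to 0$ (using local smooth convergence $\omega_\epsilon\to\pi^*\omega$ away from $\pi^{-1}(D)\cup E$), then lets $t_0\to 0$ at a point $x\in X^{\mathrm{reg}}\setminus D$ to recover the pointwise bound $\mathrm{tr}_\omega\omega_X(x)\leq C_4$. No iteration is required, and the constant needs no exponential self-improvement. You should rewrite step three along these lines; as stated, the proof does not close.
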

\begin{proof}
  For the reader's convenience we sketch the proof. Let us consider the tame approximation of $\omega$ by metrics $\omega_\epsilon$ on the resolutions $\pi:Y\to X$ constructed in Proposition~\ref{prop:L1approx}. The Chern-Lu inequality, applied to the map $\pi: (Y, \omega_\epsilon) \to (X, \omega_X)$ implies that away from $D'\cup E$ we have
 \[ \Delta_{\omega_\epsilon} \log\mathrm{tr}_{\omega_\epsilon} \pi^\ast\omega_X \geq \frac{ g^{i\bar l} g^{k\bar j} \mathrm{Ric}(\omega_\epsilon)_{i\bar j} h_{k\bar l}}{ \mathrm{tr}_{\omega_\epsilon} \pi^\ast\omega_X} - B\mathrm{tr}_{\omega_\epsilon} \pi^\ast\omega_X. \]
  Here $g_{i\bar j}$ and $h_{i\bar j}$ denote the metric components of $\omega_\epsilon$ and $\pi^*\omega_X$ respectively, and $B > 0$ is a constant independent of $\epsilon$. Recall that $\omega_\epsilon = \pi^*\omega_X + \epsilon \omega_Y + i\partial\bar\partial u_\epsilon$, where $u_\epsilon$ is bounded in $L^\infty$ independently of $\epsilon$. 
  It follows that we have 
  \[ \Delta_{\omega_\epsilon} (-u_\epsilon) \geq \mathrm{tr}_{\omega_\epsilon} \pi^\ast\omega_X - n. \]
  For a suitable constants $C_1, C_2 >0$ we therefore have
  \[ \Delta_{\omega_\epsilon} (\log\mathrm{tr}_{\omega_\epsilon} \pi^*\omega_X - C_1u_\epsilon) \geq - |(\mathrm{Ric}(\omega_\epsilon) + 2A(\omega_\epsilon + \omega_X))_-| - C_2, \]
  on $Y\setminus (D'\cup E)$. We define the function $\Phi = \max\{0, \log\mathrm{tr}_{\omega_\epsilon} \pi^*\omega_X - C_1u_\epsilon\}$, which also satisfies
  \[ \Delta_{\omega_\epsilon} \Phi \geq - |(\mathrm{Ric}(\omega_\epsilon) + 2A(\omega_\epsilon+ \omega_X))_-| - C_2, \]
  in a distributional sense. Denoting by $H(x,y,t)$ the heat kernel on $(Y,\omega_\epsilon)$, it follows that
  \[ \partial_t \int_Y \Phi(y) H(x,y,t)\, dy \geq \int_Y (- |(\mathrm{Ric}(\omega_\epsilon) + 2A(\omega_\epsilon+ \omega_X))_-|(y) - C_2) H(x,y,t)\, dy, \]
  where the integrals are with respect to $\omega_\epsilon^n$. 
  The fact that $\Vert (\mathrm{Ric}(\omega_\epsilon) + 2A(\omega_\epsilon+\omega_X))_-\Vert_{L^1} \to 0$, together with the Heat kernel bound from Theorem~\ref{thm:GPSS}, implies that for any $t_0>0$ once $\epsilon$ is sufficiently small we have
  \[ \partial_t \int_Y \Phi(y) H(x,y,t)\, dy \geq -2C_2, \]
  and so in particular
  \[ \int_Y \Phi(y) H(x,y,t_0)\, dy \leq 2C_2 + \int_Y \Phi(y) H(x,y,1)\, dy. \]
  Using the uniform bounds for $u_\epsilon$, it follows from this that 
  \[ \int_Y \Phi(y) H(x,y,t_0)\, dy \leq C_3, \]
  for a suitable constant $C_3 > 0$, which is independent of $t_0, \epsilon$. We can then first let $\epsilon\to 0$, and then $t_0\to 0$, to get the required estimate $\mathrm{tr}_\omega \omega_X \leq C_4$, which implies the claimed lower bound for $\omega$. 
\end{proof}

\section{Tame approximation for Ricci solitons}\label{sec:KRS}

Let $(X,D)$ be a log-Fano klt pair, and suppose that $T \cong (\mathbb{S}^1)^r$ is a real torus of dimension $r$ whose complexification $T_{\mathbb{C}}\cong (\mathbb{C}^{\ast})^r$ acts effectively and holomorphically on $X$, and that the action of $T_{\mathbb{C}}$ preserves $D$. Because $X$ is normal, we can identify holomorphic sections of sufficiently divisible powers of $-(K_X+D)$ with their restrictions to $X^{\text{reg}}$, where they can be viewed as plurianticanonical sections with zeros of certain orders along the support of $D$. In particular, there is a natural lift of the $T_{\mathbb{C}}$ action on $X$ to an action on the $\mathbb{Q}$-line bundle $K_X+D$ (which we interpret as an action on sufficiently divisible powers of $K_X+D$). Let $\phi$ be a smooth Hermitian metric on $-(K_X+D)$ whose curvature form is a K\"ahler metric $\omega_0$ on $X$. Recall that $\mu_{\phi}$ denotes the corresponding adapted measure. 

Given an $\omega_0$-plurisubharmonic function $u\in L^{\infty}(X)$ such that $e^{-u}\mu_{\phi}$ is $T$-invariant, its curvature $\omega_u := \omega_0 +i\partial \overline{\partial}u$ is a $T$-invariant closed positive current. The Hermitian metric $e^{-u}\mu_{\phi}$ produces a moment map $\mathbf{m}_u =(\theta_1(u),...,\theta_r(u)): X \to \mathbb{R}^r$ for the $T$-action, given by
$$\theta_j(u)(x):=\left. \frac{d}{dt} \right|_{t=0} \exp(ite_j)^{\ast} \log(e^{-u}\mu_{\phi}),$$
where $(e_1,...,e_r)$ is a choice of basis of the Lie algebra $\text{Lie}(T)$ of $T$. Moreover, the image $P$ of $\mathbf{m}_u$ is a polytope which only depends on $(X,D)$. Given $\xi \in \text{Lie}(T)$, with corresponding holomorphic vector field $V$ on $X$, we define $g_V: P\to \mathbb{R}$ by $g_V(\alpha):=e^{-\alpha(\xi)}$ for $\alpha \in P$. 

We then say that $(X,\omega_u,V)$ is a singular K\"ahler-Ricci soliton on $(X,D,T)$ if it satisfies the complex Monge-Amp\`ere equation
\begin{equation} \label{eq:solitoneq} g_{V}(\mathbf{m}_u)(\omega_0 + i \partial \overline{\partial} u)^n = e^{-u}\mu_{\phi},\end{equation}
where the left-hand side is a Radon measure on $X$ which does not charge pluripolar sets of $X$, defined in \cite{BN}. By \cite[Proposition 3.7]{HL}, $\omega_u$ is smooth on $X^{\text{reg}}\setminus D$, where it satisfies the K\"ahler-Ricci soliton equation
$$\mathrm{Ric}(\omega_u) + \mathcal{L}_V \omega_u = \omega_u.$$

The following is the soliton analogue of Proposition \ref{prop:L1approx}.

\begin{proposition} \label{solitontameapprox}
Suppose $(X,\omega,V)$ is a singular K\"ahler-Ricci soliton metric on $(X,D)$.
Then there is a tame approximation $(\omega_{\delta})$ on a resolution
$\pi:Y \to X$ and a holomorphic vector field $\widetilde{V}$ on $Y$ satisfying $\pi_{\ast}\widetilde{V}=V$ on $X^{\text{reg}}$, such that the following hold: 
\begin{itemize}
\item[(a)] $\lim_{\delta\to0}||(\mathrm{Ric}(\omega_{\delta})+\mathcal{L}_{\widetilde{V}}\omega_{\delta}+\omega_{\delta})_{-}||_{L^{1}(Y,\omega_{\delta})}=0,$
\item[(b)] The drift heat kernels $H_{\delta}$ of $(Y,\omega_{\delta},\log g_V(\mathbf{m}_{\delta}))$ converge smoothly on $\pi^{-1}(X^{\text{reg}})$ to the drift heat kernel $H$ of $(X,\omega,\log g_V(\mathbf{m}_{u}))$, and satisfy the uniform off-diagonal upper bound
$$\sup_{\delta} \sup_{x,y\in Y} H_{\delta}(x,y,t) \leq Ct^{-m}$$
for some $m\in (0,\infty)$, where $\mathbf{m}_{\delta}:Y\to \text{Lie}(T)$ is the moment map corresponding to the metric $\omega_{\delta}$.
\end{itemize}
\end{proposition}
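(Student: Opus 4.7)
The plan is to mimic the proof of Proposition~\ref{prop:L1approx}, replacing the unweighted Monge-Amp\`ere setup with the corresponding weighted soliton setup. First I would fix a $T$-equivariant log resolution $\pi: Y\to X$---available by Koll\'ar-type equivariant resolution since $D$ is preserved by the $T_{\mathbb{C}}$-action---which automatically lifts the torus action to $Y$ and furnishes the holomorphic vector field $\widetilde V$ on $Y$ with $\pi_\ast \widetilde V = V$ on $X^{\text{reg}}$. By averaging I may take the reference metric $\omega_Y$ to be $T$-invariant.

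To construct the approximation, write $\pi^*\mu_\phi = e^F\omega_Y^n$ on $Y\setminus(E\cup\pi^{-1}(D'))$, where the klt hypothesis ensures $e^F\in L^p$ for some $p>1$, and introduce $T$-invariant smoothings of $F$ and of $g = \log(\pi^*\omega_0^n/\omega_Y^n)$ exactly as in Proposition~\ref{prop:L1approx}. For each $\delta>0$, I would then solve the $T$-invariant weighted equation
\begin{equation*}
g_{\widetilde V}(\mathbf{m}_\delta)\,\omega_\delta^n = e^{-u_\delta + F_\delta}\,\omega_Y^n,\qquad \omega_\delta := \pi^*\omega_0 + \delta\omega_Y + \ddbar u_\delta,
\end{equation*}
where $\mathbf{m}_\delta$ is the moment map of $\omega_\delta$ and $u_\delta$ is $T$-invariant. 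Existence of a smooth $T$-invariant solution follows from the soliton theory of Han-Li~\cite{HL}, and since $g_{\widetilde V}$ is pinched between two positive constants on the moment polytope of $[\pi^*\omega_0+\delta\omega_Y]$ (which is bounded independently of $\delta$), the uniform $L^\infty$ bound on $u_\delta$ reduces to \cite[Theorem~4.1]{EGZ09}. The stability argument of Proposition~\ref{prop:L1approx}, together with Savin's small perturbation theorem, then delivers local smooth convergence $u_\delta\to u$ on $\pi^{-1}(X^{\text{reg}}\setminus D)$, showing that $\omega_\delta$ is a tame approximation of $\omega$ in the sense of Definition~\ref{defn:tameapprox}.

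For part (a), taking $-\ddbar\log$ of the approximate equation and using the Hamiltonian identity relating $\mathcal{L}_{\widetilde V}\omega_\delta$ to $\ddbar \log g_{\widetilde V}(\mathbf{m}_\delta)$, the weight $g_{\widetilde V}$ cancels out of $\mathrm{Ric}(\omega_\delta) + \mathcal{L}_{\widetilde V}\omega_\delta + \omega_\delta$, which then differs from the expression appearing in Proposition~\ref{prop:L1approx} only by a globally smooth bounded form. The decomposition into $\alpha_\delta - \beta_\delta$ and its $L^1$ estimate go through verbatim: $\alpha_{\delta,-}$ is supported in a $\kappa$-neighborhood of $E\cup\pi^{-1}(D')$ whose $\omega_\delta^n$-volume shrinks to zero, while $\int_Y |\beta_{\delta,+}|\,\omega_\delta^n$ reduces to an intersection-number computation plus a small $N_\kappa$-contribution, both of which vanish in the joint limit $\delta,\kappa\to 0$.

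For part (b), since $\mathbf{m}_\delta$ takes values in a uniformly bounded polytope, $\log g_{\widetilde V}(\mathbf{m}_\delta)$ is uniformly bounded and the drift measure $g_{\widetilde V}(\mathbf{m}_\delta)\omega_\delta^n$ is comparable to $\omega_\delta^n$. The Sobolev inequality and heat-kernel bound from Theorem~\ref{thm:GPSS} therefore transfer to the drift Laplacian $\Delta_\delta + \nabla\log g_{\widetilde V}(\mathbf{m}_\delta)\cdot\nabla$ via a standard Nash-type argument, giving the uniform on-diagonal bound $H_\delta(x,y,t)\leq Ct^{-m}$; smooth convergence on compact subsets of $\pi^{-1}(X^{\text{reg}})$ follows from the smooth convergence of $\omega_\delta$ there and standard linear parabolic regularity. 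The main obstacle is the implicit coupling between $u_\delta$ and $\mathbf{m}_\delta$ in the weighted Monge-Amp\`ere equation; however, since $g_{\widetilde V}$ is uniformly bounded and Lipschitz on the bounded moment polytope, this coupling behaves as a lower-order perturbation and the pluripotential arguments of Proposition~\ref{prop:L1approx} carry over with only routine bookkeeping.
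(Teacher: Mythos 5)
Your overall architecture matches the paper's: take a $T$-equivariant resolution, lift $V$ to $\widetilde V$, average the reference data over $T$, solve a regularized weighted Monge--Amp\`ere equation on $Y$, use $i\partial\bar\partial\log g_{\widetilde V}(\mathbf{m}_\delta)=\mathcal{L}_{\widetilde V}\omega_\delta$ to cancel the weight and repeat the $\alpha-\beta$ decomposition of Proposition~\ref{prop:L1approx}, and finally transfer the GPSS Sobolev and heat-kernel bounds to the drift Laplacian using that $\log g_{\widetilde V}(\mathbf{m}_\delta)$ is uniformly bounded. However, there is a concrete gap in the construction of the approximating metrics. You propose solving
\[
g_{\widetilde V}(\mathbf{m}_\delta)\,\omega_\delta^n = e^{-u_\delta+F_\delta}\,\omega_Y^n,
\]
which has the Fano/Aubin sign $e^{-u_\delta}$, and assert that the uniform $L^\infty$ estimate ``reduces to \cite[Theorem~4.1]{EGZ09}'' because $g_{\widetilde V}$ is pinched. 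This is circular: EGZ Theorem~4.1 requires a uniform $L^p$ bound on the full right-hand side, which here contains $e^{-u_\delta}$, and to bound that you already need a uniform lower bound on $u_\delta$. For the sign $e^{-u_\delta}$ the $C^0$ estimate is \emph{not} automatic; one would need some uniform-in-$\delta$ coercivity/properness of the associated weighted Mabuchi-type functional on $Y$, which you do not invoke and which is not routine.

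The paper avoids this altogether by a different regularization: it writes $e^{-u}\mu_\phi = e^{u+F}\Omega_Y$ for a smooth volume form $\Omega_Y$ and $F=-2u+\sum_i a_i\log|s_i|^2_{h_i}-\sum_i d_i\log|s_{D_i'}|^2_{h_{D_i'}}$, then replaces the bounded soliton potential $u$ inside $F$ by a bounded smooth quasi-psh approximant $\phi_j$, and solves
\[
g_V(\mathbf{m}_{\epsilon,u_\delta})\,\omega_\delta^n = e^{u_\delta + F_\delta}\,\Omega_Y,\qquad F_\delta=-2\phi_j-\sum_i d_i\log\big(|s_{D_i'}|^2_{h_{D_i'}}+\delta\big)+\sum_i a_i\log\big(|s_i|^2_{h_i}+\delta\big),
\]
with the unknown $u_\delta$ now appearing with a $+$ sign. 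This is the negative-K\"ahler--Einstein sign: the $C^0$ estimate follows directly by the maximum principle together with the uniform $L^p$ control of $e^{F_\delta}$, and in the limit $j\to\infty$, $\delta\to0$ one has $u_\delta+F_\delta\to -u+(\text{divisor terms})$, recovering the soliton equation. This sign manipulation is the essential new device in the soliton case, and it is missing from your proposal. (A secondary, harmless difference: you regularize the divisor terms via B\l ocki--Ko\l odziej smoothings of $\max\{\cdot,-\epsilon^{-1}\}$ as in Proposition~\ref{prop:L1approx}, while the paper simply uses $\log(|s|^2+\delta)$; either scheme gives the required $L^1$ Ricci estimate.)
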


\begin{proof} (a) 
Choose a smooth Hermitian metric $h_{0}$ on $-(K_{X}+D)$ whose curvature $\omega_{X}:=\Theta_{h_{0}}$
is a K\"ahler metric on $X$. Let $\mu_0$ be the corresponding adapted
volume measure. By averaging over $T$, we may assume $h_0$ is $T$-invariant. Then $\omega=\omega_{X}+i\partial\overline{\partial}u_{KRS}$
for some bounded $T$-invariant $\omega_X$-plurisubharmonic function $u_{KRS}$,
and the corresponding weighted Monge-Ampere measure $\text{MA}_{g_{V}}(u_{KRS})$ constructed in \cite{BN} satisfies
\[
\text{MA}_{g_{V}}(u_{KRS})=ce^{-u_{KRS}}\mu_{0}
\]
for some $c>0$. Let $\pi:Y\to X$
be a $T$-equivariant resolution of singularities which is an isomorphism over $X^{\text{reg}}$, which exists by \cite[Proposition 3.9.1, Theorem 3.36]{Kollar}. Set $\widetilde{V}_{0}:=(\pi|_{X^{\text{reg}}})_{\ast}^{-1}V\in H^{0}(Y_{0},T^{1,0}Y),$ where $Y_{0}:=\pi^{-1}(X^{\text{reg}}\setminus D)$. Then $\widetilde{V}_{0}$ has
a unique extension to some $\widetilde{V} \in H^{0}(Y,T^{1,0}Y)$, and there is a $T_{\mathbb{C}}$-action on $L:=-\pi^{\ast}(K_{X}+D)$ induced by the natural action of $T_{\mathbb{C}}$ on $-(K_X+D)$. 

Let $\mathbf{m}_{0,u}$ be the moment map associated to the $T$-action on $Y$ and the $T$-invariant semipositive current $\pi^{\ast}\omega_X+i\partial \overline{\partial}u$. The singular K\"ahler-Ricci soliton equation on $(X,V,T)$ is then equivalent to the equation
\[
g_{V}(\mathbf{m}_{0,u})(\pi^{\ast}\omega_{X}+i\partial\overline{\partial}u)^{n}=e^{u+F}\Omega_{Y}
\]
on $Y$, where $u:=\pi^{\ast}u_{KRS}$, $g_V(\mathbf{m}_{0,u})=\pi^{\ast}g_V(\mathbf{m}_{u_{KRS}})$, $\Omega_{Y}\in\mathcal{A}^{n,n}(Y)$
is a smooth volume form satisfying 
\[
\mathrm{Ric}(\Omega_{Y})=\pi^{\ast}\omega_{X}-\sum_i a_{i}\Theta_{h_i} + \sum_i d_i \Theta_{h_{D_i'}},
\]
where $a_i,E_i,s_i,h_i$ are as in Proposition \ref{prop:L1approx}, $\sum_i d_i D_i'$ is the proper transform of $D$ with respect to $\pi$, and where 
\[
F=-2u+\sum_i a_{i}\log|s_i|_{h_i}^{2} - \sum_i d_i \log |s_{D_i'}|_{h_{D_i'}}^2.
\]

Because $E_i,D_i'$ are $T_{\mathbb{C}}$-invariant, there is a natural linearization of the $T_{\mathbb{C}}$ action; we can thus average over $T$ to assume that $|s_i|_{h_i}$, $|s_{D_i'}|_{h_{D_i'}},\Omega_Y$ are $T$-invariant.
It was shown in \cite{BN} that $g_{V}(\mathbf{m}_{0,u})(\pi^{\ast}\omega_{X}+i\partial\overline{\partial}u)^{n}$
is a well-defined measure on $Y$ which does not charge pluripolar
sets, so pushes forward under $\pi$ to the measure $\text{MA}_{g_{V}}(u_{KRS})$
on $X$. Choose any $T$-invariant K\"ahler metric $\omega_Y$ on $Y$. For each $\epsilon>0$, set $\eta_{\epsilon}:=\pi^{\ast}\omega_{X}+\epsilon\omega_{Y}$. For
$j\in\mathbb{N}$, we let $\phi_{j}\in C^{\infty}(Y)\cap PSH(Y,\pi^{\ast}\omega_{X}+j^{-1}\omega_{Y})$
be a uniformly bounded sequence of $T$-invariant functions which converge locally
smoothly on $Y_{0}$ to $u$. 

For any $T$-invariant $\eta_{\epsilon}$-plurisubharmonic function $u$ on $Y$, we let $\mathbf{m}_{\epsilon,u}$ be the moment map corresponding to $\eta_{\epsilon}+i\partial \overline{\partial}u$. Consider the Monge-Amp\`ere equations 
\begin{equation}
g_V(\mathbf{m}_{\epsilon,u_{\delta}})(\eta_{\epsilon}+i\partial\overline{\partial}u_{\delta})^{n}=e^{u_{\delta}+F_{\delta}}\Omega_{Y},\label{SolitonMA}
\end{equation}
where for some $j>2/\epsilon$, we set
\[
F_{\delta}:=-2\phi_{j}-\sum_i d_i \log(|s_{D_i'}|_{h_{D_i'}}^2+\delta)+\sum_i a_i\log(|s_i|_{h_i}^{2}+\delta).
\]

The equation (\ref{SolitonMA}) has unique $T$-invariant smooth solutions $u_{\delta}$  by a standard application of the continuity method: the $C^0$ estimate is automatic, $|\log g_V(\mathfrak{m}_{\epsilon,u_{\delta}})|$ is bounded because the polytopes corresponding to $\eta_{\epsilon}$ are bounded, and the higher order estimates are as in \cite[Section 6]{Zhu00} or \cite[Proposition 3.7]{HL}. In particular, $\omega_{\delta}=\eta_{\epsilon}+i\partial\overline{\partial}u_{\delta}$ are smooth $T$-invariant K\"ahler metrics. We set $f_{\delta}:=- \log g_V(\mathbf{m}_{\epsilon,u_{\delta}})$, so that $i\partial \overline{\partial}f_{\delta}=\mathcal{L}_{\widetilde{V}}\omega_{\delta}$, and we can argue as in Proposition \ref{prop:L1approx} to estimate
\begin{align*}
\mathrm{Ric}(\omega_{\delta})
\geq  -\omega_{\delta}-\mathcal{L}_{\widetilde{V}}\omega_{\delta}+\alpha-\beta,
\end{align*}
where 
\[\alpha = \sum_i d_i (\ddbar\log(|s_{D_i'}|_{h_{D_i'}}^2+\delta)+\Theta_{D'_i})+\sum_{a_i<0} (-a_i)(\ddbar\log(|s_i|_{h_i}^{2}+\delta)+\Theta_{E_i})\]
\[\beta = \sum_{a_i>0} a_i(\ddbar\log(|s_i|_{h_i}^{2}+\delta)+\Theta_{E_i}).\]
Because the polytopes corresponding to $\eta_{\epsilon}$ are bounded uniformly in $\epsilon$, we know that $|f_{\delta}|\leq C$ independently of $\epsilon,\delta,j$. Thus we have the uniform $C^{0}$ estimate $\sup_{Y}|u_{\delta}|\leq C$. The $C^{2}$ and higher regularity estimates are proved as in \cite[Proposition 3.7]{HL}. Therefore we infer that $(\omega_\delta)$ is a tame approximation for $\omega$. The remainder of the proof of (a) follows exactly as in Proposition \ref{prop:L1approx}.

(b) By \cite[Theorem 4.1]{GPSS2}, there exist $B<\infty$ and $\gamma>1$ such that such that for all $\delta>0$ and $v\in C^1(Y)$, we have
$$\left( \int_Y |v|^{2\gamma} \omega_{\delta}^n \right)^{\frac{1}{\gamma}} \leq B\int_Y (|\nabla v|_{\omega_{\delta}}^2+|v|^2) \omega_{\delta}^n.$$
Because 
$$|f_{\delta}|\leq  \sup_{\zeta \in P_{\delta}} |\langle \zeta ,\xi\rangle| <\infty$$
for each $\delta \in (0,1]$, where $P_{\delta}$ is a uniformly bounded sequence of polytopes in $\text{Lie}(T)$, we obtain $B'<\infty$ such that for all $\delta \in (0,1]$ and $v\in C^1(Y)$, we have
$$\left( \int_Y |v|^{2\gamma} e^{-f_{\delta}}\omega_{\delta}^n \right)^{\frac{1}{\gamma}} \leq B'\int_Y (|\nabla v|_{\omega_{\delta}}^2 + |v|^2) e^{-f_{\delta}} \omega_{\delta}^n.$$
That is, the weighted manifolds $(Y,\omega_{\delta},f_{\delta})$ satisfy a uniform Sobolev inequality. The proof \cite[Section 5.1]{GPSS2} of \cite[Formula 2.6]{GPSS2} can be modified by replacing the heat kernel with the drift heat kernel $K$. By using $L^p$ norms with respect to the weight $e^{f_{\delta}}$, one obtains
$H_{\delta}:Y\times Y\times (0,\infty)\to (0,\infty)$ satisfying Gaussian estimates of the form
$$H_{\delta}(x,y,t)\leq \frac{C}{t^{C}}\exp \left( -\frac{d_{\omega_{\delta}}^2(x,y)}{Ct} \right)$$
for some $C<\infty$ independent of $\delta>0$. In particular, the claimed estimate holds with $m=C$. Moreover, after passing to a subsequence, the smooth convergence $\omega_{\delta}\to \pi^{\ast}\omega_{KRS}$ on $Y\setminus (D'\cup E)$ implies that as $\delta \searrow 0$, the functions $H_{\delta}$ converge in $C_{loc}^{\infty}$ to some smooth function $H: (Y\setminus (D'\cup E))\times (Y\setminus (D'\cup E))\times (0,\infty)\to [0,\infty)$ satisfying the heat equation. Then the claim follows from the argument of \cite[Lemma 10.3]{GPSS2}.
\end{proof}

Given the previous approximation Lemma, the proof of the K\"ahler current property for $\omega$ proceeds similarly to Theorem~\ref{thm:Kcurrentproof} or \cite[Theorem 16]{Sz24}.  

\begin{proposition}\label{prop:KRS} Suppose $(X,\omega,V)$ is a singular K\"ahler-Ricci soliton on $(X,D)$. Then $\omega$ is a K\"ahler current. 
\end{proposition}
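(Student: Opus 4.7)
The plan is to run the proof of Theorem~\ref{thm:Kcurrentproof} essentially verbatim, but with the ordinary Laplacian and heat kernel replaced by the drift Laplacian $L_\delta$ and drift heat kernel $H_\delta$ of the weighted manifold $(Y,\omega_\delta,\log g_V(\mathbf{m}_{\epsilon,u_\delta}))$ furnished by Proposition~\ref{solitontameapprox}. Writing $f_\delta:=-\log g_V(\mathbf{m}_{\epsilon,u_\delta})$, the key identity $\mathcal{L}_{\widetilde V}\omega_\delta = i\partial\overline{\partial} f_\delta$ is precisely what makes the drift operator the right object here: it converts the near-soliton Ricci lower bound $\mathrm{Ric}(\omega_\delta)+\mathcal{L}_{\widetilde V}\omega_\delta+\omega_\delta\geq -\zeta_\delta$, with $\|\zeta_\delta\|_{L^1(\omega_\delta)}\to 0$ by part (a), into an honest Bakry-\'Emery-type lower bound modulo the $L^1$-small error $\zeta_\delta$.

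Concretely, the Chern-Lu inequality applied to $\pi:(Y,\omega_\delta)\to(X,\omega_X)$ gives, on $\pi^{-1}(X^{\text{reg}}\setminus D)$,
\[
\Delta_{\omega_\delta}\log\mathrm{tr}_{\omega_\delta}\pi^\ast\omega_X \geq \frac{g^{i\bar\ell}g^{k\bar j}\mathrm{Ric}(\omega_\delta)_{i\bar j}h_{k\bar\ell}}{\mathrm{tr}_{\omega_\delta}\pi^\ast\omega_X} - B\,\mathrm{tr}_{\omega_\delta}\pi^\ast\omega_X,
\]
where $h=\pi^\ast\omega_X$ and $B$ depends only on the bisectional curvature of $\omega_X$. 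Substituting $\mathrm{Ric}(\omega_\delta)\geq -\omega_\delta-i\partial\overline{\partial}f_\delta-\zeta_\delta$ from the soliton Ricci bound, the Hessian-of-$f_\delta$ contribution combines with $\Delta_{\omega_\delta}$ to produce the drift Laplacian $L_\delta$, yielding
\[
L_\delta\log\mathrm{tr}_{\omega_\delta}\pi^\ast\omega_X \geq -1 - |\zeta_\delta| - B\,\mathrm{tr}_{\omega_\delta}\pi^\ast\omega_X.
\]
Combining this with the analog $L_\delta(-u_\delta)\geq \mathrm{tr}_{\omega_\delta}\pi^\ast\omega_X - n - C$ obtained from the weighted Monge-Amp\`ere equation \eqref{SolitonMA} (using the uniform $L^\infty$ bound on $f_\delta$ and the $T$-invariance of $u_\delta,f_\delta$ to control the extra drift term $\langle\nabla f_\delta,\nabla u_\delta\rangle$), and choosing $C_1\gg B$, we obtain the distributional inequality
\[
L_\delta\Phi_\delta \geq -|\zeta_\delta|-C_2
\]
on all of $Y$, where $\Phi_\delta:=\max\{0,\log\mathrm{tr}_{\omega_\delta}\pi^\ast\omega_X - C_1 u_\delta\}$ and $C_2$ is independent of $\delta$.

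One then runs the drift heat flow: integrating against $H_\delta(x,\cdot,t)\omega_\delta^n$, the Gaussian upper bound $H_\delta(x,y,t)\leq Ct^{-m}$ from Proposition~\ref{solitontameapprox}(b) together with $\|\zeta_\delta\|_{L^1(\omega_\delta)}\to 0$ gives, for any fixed $t_0>0$ and all sufficiently small $\delta$,
\[
\int_Y \Phi_\delta(y) H_\delta(x,y,t_0)\,\omega_\delta^n \leq C_3,
\]
with $C_3$ independent of $\delta$ and $t_0$. Sending $\delta\to 0$, using the smooth convergence of $H_\delta$ on $\pi^{-1}(X^{\text{reg}})$ to the limiting drift heat kernel, and then $t_0\to 0^+$, extracts a uniform upper bound on $\mathrm{tr}_\omega\omega_X$ on $X^{\text{reg}}\setminus D$, which is equivalent to the K\"ahler current inequality $\omega\geq c\omega_X$ on $X$.

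The main obstacle is the soliton-adapted Chern-Lu step, namely verifying that the Hessian-of-$f_\delta$ contribution produced by the Ricci lower bound is absorbed exactly by the drift correction $L_\delta-\Delta_{\omega_\delta}$ evaluated on $\log\mathrm{tr}_{\omega_\delta}\pi^\ast\omega_X$, up to a multiple of $\mathrm{tr}_{\omega_\delta}\pi^\ast\omega_X$ that is swallowed by the $B$-term. A related delicate point is controlling the drift cross-term $\langle\nabla f_\delta,\nabla u_\delta\rangle$ in $L_\delta u_\delta$; this is handled using the $T$-invariance built into the construction of the approximation and the uniform $L^\infty$ bound on $f_\delta$. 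Apart from these soliton-specific computations, the argument is a line-by-line adaptation of Theorem~\ref{thm:Kcurrentproof}.
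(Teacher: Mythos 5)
Your plan follows the same drift--heat-kernel strategy as the paper's proof and correctly identifies the two delicate points (the soliton-adapted Chern--Lu step and the moment-map cross term). However there is a genuine gap in the Chern--Lu step, which you flag but dismiss too quickly. When you substitute the near-soliton Ricci lower bound into Chern--Lu, the numerator term coming from $i\partial\bar\partial f_\delta = \mathcal{L}_{\widetilde V}\omega_\delta$ does \emph{not} by itself combine with $\Delta_{\omega_\delta}$ to produce the drift operator. The relevant pointwise identity is
\[
\xi\log\mathrm{tr}_{\omega_{\delta}} h
= -\frac{g^{\overline{q}i}g^{\overline{j}p}h_{i\overline{j}}(\mathcal{L}_{\xi}g)_{p\overline{q}}}{\mathrm{tr}_{\omega_{\delta}} h}+\frac{\mathrm{tr}_{\omega_{\delta}}(\mathcal{L}_{\xi} h)}{\mathrm{tr}_{\omega_{\delta}} h},
\]
so rewriting the Hessian-of-$f_\delta$ term as a vector-field derivative leaves the error $\mathrm{tr}_{\omega_\delta}(\mathcal{L}_{\widetilde V} h)/\mathrm{tr}_{\omega_\delta} h$. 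With $h = \pi^*\omega_X$ there is no a priori control of $\mathcal{L}_{\widetilde V}\pi^*\omega_X$ by a constant times $\pi^*\omega_X$, so the claim that this is ``swallowed by the $B$-term'' is not justified. A second point: identifying $\Delta_{\omega_\delta}-\widetilde V$ (a complex vector field derivative) with the drift operator $\Delta_{\omega_\delta}-\langle\nabla\log g_V(\mathbf{m}_\delta),\nabla\cdot\rangle$ on $\log\mathrm{tr}_{\omega_\delta} h$ requires $\mathrm{Im}(\widetilde V)\log\mathrm{tr}_{\omega_\delta} h=0$, i.e.\ $T$-invariance of the reference metric $h$, which you invoke only for the $u_\delta$ term.

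The paper fixes both issues by performing the Chern--Lu step with $h = \pi^*\eta_{FS}$, the pullback of a $T$-averaged Fubini--Study metric from a projective embedding induced by a power of $-(K_X+D)$, rather than with $\pi^*\omega_X$. Because the $T_{\mathbb{C}}$-action is the restriction of a linear torus action on projective space, one has $|\mathcal{L}_{\widetilde V}\eta_{FS}|\leq C\eta_{FS}$ globally, so the error term is bounded by a constant; and the $T$-invariance of both $\omega_\delta$ and $\pi^*\eta_{FS}$ gives $\mathrm{Im}(\widetilde V)\log\mathrm{tr}_{\omega_\delta}(\pi^*\eta_{FS})=0$, making the drift identification legitimate. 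The trace $\mathrm{tr}_{\omega_\delta}(\pi^*\omega_X+\epsilon\omega_Y)$ that appears from the Monge--Amp\`ere equation is then compared with $\mathrm{tr}_{\omega_\delta}(\pi^*\eta_{FS})$ using the uniform equivalence of the two reference metrics. To make your argument rigorous you should make the analogous replacement of $\pi^*\omega_X$ by $\pi^*\eta_{FS}$ in the trace and Chern--Lu computation.
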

\begin{proof} Let $\omega_{\delta}$ be the tame approximation from Lemma \ref{solitontameapprox}, and let $\eta_{FS}$ denote the pullback of a normalized Fubini-Study metric by a projective embedding of $X$ induced by a power of $-(K_X+D)$. Because the action of $T_{\mathbb{C}}$ is the restriction of a linear torus action on projective space, we may average the Fubini-Study metric over $T$ in order to assume that $\eta_{FS}$ is $T$-invariant. In a local holomorphic chart, let $g_{i\overline{j}}$, $h_{i\overline{j}}$ be the components of the metric tensor of $\omega_\delta,\pi^{\ast}\eta_{FS}$,
respectively, and let $\xi^{k}$ be the components of a holomorphic
vector field $\xi$. Then we can compute
\begin{equation*}
\xi\log\text{tr}_{\omega_{\delta}}(\pi^{\ast}\eta_{FS})
= -\frac{g^{\overline{q}i}g^{\overline{j}p}h_{i\overline{j}}(\mathcal{L}_{\xi}g)_{p\overline{q}}}{\text{tr}_{\omega_{\delta}}(\pi^{\ast}\eta_{FS})}+\frac{\text{tr}_{\omega_{\delta}}(\mathcal{L}_{\xi}(\pi^{\ast}\eta_{FS}))}{\text{tr}_{\omega_{\delta}}(\pi^{\ast}\eta_{FS})}.
\end{equation*}
Take $\xi = \widetilde{V}$, which satisfies 
$$\text{Im}(\widetilde{V}) \log \text{tr}_{\omega_{\delta}}(\pi^{\ast}\eta_{FS}) = 0$$
since both $\omega_{\delta},\pi^{\ast}\eta_{FS}$ are $T$-invariant. Thus combining with the Chern-Lu inequality gives
\begin{align*} \Delta_{\omega_{\delta}}\log \text{tr}_{\omega_{\delta}}(\pi^{\ast}\eta_{FS}) &- \langle \nabla^{g_{\delta}} \log g(\mathbf{m}_{\delta}),\nabla^{g_{\delta}} \log \text{tr}_{g_{\delta}}(\pi^{\ast}\eta_{FS})\rangle_{\omega_{\delta}} = (\Delta_{\omega_{\delta}} -\widetilde{V})\log \text{tr}_{\omega_{\delta}}(\pi^{\ast}\eta_{FS}) \\& \geq \frac{g^{\overline{\ell}i}g^{\overline{j}k}(\operatorname{Ric}(g_{\delta})+ \mathcal{L}_{\widetilde{V}}g_{\delta} -\lambda g_{\delta})_{i\overline{j}}}{\text{tr}_{\omega_{\delta}}(\pi^*\eta_{FS})}-A-A\text{tr}_{\omega_{\delta}}(\pi^{\ast}\eta_{FS})
\end{align*}
for some $A<\infty$ independent of $\delta>0$.

Recalling that $\log g(\mathbf{m}_{\delta})=\langle \xi, \mathbf{m}_{\delta}\rangle$ for some fixed $\xi \in \mathfrak{g}$, we have
\begin{align*} \Delta_{\omega_{\delta}} u - \langle \nabla^{g_{\delta}} \log g(\mathbf{m}_{\delta}),\nabla^{g_{\delta}}u\rangle & =  n - \text{tr}_{\omega_{\delta}}(\pi^{\ast}\omega_X + \epsilon \omega_Y) - \langle \xi, \mathbf{m}_{\delta}-\mathbf{m}_{\pi^{\ast}\omega_X - \epsilon \omega_Y} \rangle \\ & \leq - C^{-1}\text{tr}_{\omega_{\delta}}(\pi^{\ast}\eta_{FS}) +C
\end{align*}
for some $C>0$, since the images of $ \mathbf{m}_{\delta} $ and   $\mathbf{m}_{\pi^*\omega_X+\epsilon \omega_Y}$  stay within a bounded set of $\mathfrak{t}^\vee$ for all $\epsilon$ small. 
We can now argue exactly as in the proof of Theorem~\ref{thm:Kcurrentproof}, using the drift heat kernel $H_\delta$ of $(Y, \omega_\delta, \tilde{V})$ instead of the heat kernel. 
\end{proof}

\section{The RCD property with $L^p$ control of Ricci}\label{sec:RCD}
In this section we prove Theorem~\ref{thm:LpRCD}. We do not actually need the metric on $(X, D)$ to be K\"ahler-Einstein, and have the following more general result. 
\begin{theorem}\label{thm:RCD2}
  Suppose that $\omega$ is a closed positive $(1,1)$-current with locally bounded potentials that is smooth on $X^{reg}\setminus D$ for a divisor $D$, and satisfies $\mathrm{Ric}(\omega) \geq \lambda\omega$ there. Define the metric measure space $\hat{X}$ as the metric completion of $X^{reg}\setminus D$ as above, equipped with the measure $\omega^n$. 
  Suppose that $\pi: Y \to X$ is a resolution, and $\omega$ has a tame approximation with metrics $\omega_\epsilon$ on $Y$ such that $\Vert \mathrm{Ric}_-(\omega_\epsilon)\Vert_{L^p(\omega_\epsilon)} < C$ for a constant $C$ independent of $\epsilon$, and $p > \frac{2n-1}{n}$. Then the metric measure space $\hat{X}$ is a non-collapsed $RCD(2n,\lambda)$-space. 
\end{theorem}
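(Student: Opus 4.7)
By Proposition~\ref{prop:Honda}, it suffices to show that every eigenfunction of $\Delta_\omega$ on $X^{\text{reg}}\setminus D$ is Lipschitz. I would realize such an eigenfunction $f$ as a smooth subsequential limit of eigenfunctions $f_\epsilon$ of $\Delta_{\omega_\epsilon}$ on the approximating manifolds $(Y,\omega_\epsilon)$: the uniform Sobolev and heat-kernel estimates of Theorem~\ref{thm:GPSS} give spectral compactness for the family, so after normalizing $\|f_\epsilon\|_{L^2(\omega_\epsilon^n)}=1$, along a subsequence $f_\epsilon\to f$ in $L^2$ and smoothly on compact subsets of $\pi^{-1}(X^{\text{reg}}\setminus D)$, with eigenvalues $\mu_\epsilon\to\mu$. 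All of the spectrum of $\Delta_\omega$ is produced in this way via min-max together with Theorem~\ref{thm:GPSS}(c).

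The heart of the proof is a uniform $L^\infty$ bound on $u_\epsilon:=|\nabla f_\epsilon|_{\omega_\epsilon}$ independent of $\epsilon$. The Bochner formula gives
\begin{equation*}
\tfrac12\Delta_{\omega_\epsilon}u_\epsilon^2 = |\nabla^2 f_\epsilon|^2 + \mathrm{Ric}(\omega_\epsilon)(\nabla f_\epsilon,\nabla f_\epsilon) - \mu_\epsilon u_\epsilon^2.
\end{equation*}
Since $df_\epsilon$ is a closed $1$-form on the real $2n$-dimensional manifold $Y$, the refined Kato inequality $|\nabla u_\epsilon|^2 \leq \tfrac{2n-1}{2n}|\nabla^2 f_\epsilon|^2$ applies (with a controlled error involving $\mu_\epsilon f_\epsilon$, bounded uniformly by the $L^\infty$ theory of eigenfunctions). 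Substituting yields the weak differential inequality
\begin{equation*}
u_\epsilon\Delta_{\omega_\epsilon}u_\epsilon \geq \tfrac{1}{2n-1}|\nabla u_\epsilon|^2 - \bigl(|\mathrm{Ric}_-(\omega_\epsilon)| + C_\epsilon\bigr)u_\epsilon^2,
\end{equation*}
where $C_\epsilon$ is a bounded potential, and the extra positive gradient term is the crucial new ingredient, absent from \cite[Proposition 15]{Sz24}.

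I then substitute $v_\epsilon := u_\epsilon^\alpha$ for a suitable $\alpha > 1$ tuned to absorb the gradient term, producing a weak subsolution of $\Delta_{\omega_\epsilon}v_\epsilon \geq -C(|\mathrm{Ric}_-(\omega_\epsilon)|+1)v_\epsilon$. De Giorgi--Moser iteration using the uniform Sobolev inequality from Theorem~\ref{thm:GPSS}(a) together with the uniform $L^p$ bound on $|\mathrm{Ric}_-(\omega_\epsilon)|$ then gives $\|u_\epsilon\|_{L^\infty}\leq C$. The refined Kato constant $\tfrac{2n-1}{2n}$ enlarges the admissible range of $\alpha$, and carried carefully through the iteration this lowers the required integrability threshold for $|\mathrm{Ric}_-|$ to exactly $p > \tfrac{2n-1}{n}$. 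Combined with the smooth convergence $\omega_\epsilon\to\pi^*\omega$ on $\pi^{-1}(X^{\text{reg}}\setminus D)$, the uniform bound on $u_\epsilon$ transfers to a local Lipschitz bound on $f$ over $X^{\text{reg}}\setminus D$, and Proposition~\ref{prop:Honda} then concludes the RCD claim.

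The main obstacle is the Moser iteration: tracking the refined Kato gradient term through the choice of $\alpha$ and the iteration to obtain the sharp threshold $p > \tfrac{2n-1}{n}$, rather than the naive exponent one would get from the classical Kato inequality. Secondary technical points are handling the non-differentiability of $u_\epsilon$ at its zero set via the usual $\sqrt{u_\epsilon^2+\eta}$-regularization followed by $\eta\searrow 0$, verifying the error terms in the refined Kato (involving $\mu_\epsilon f_\epsilon$) are harmless because $\|f_\epsilon\|_{L^\infty}$ and $\mu_\epsilon$ are uniformly bounded, and ensuring the local smooth convergence off the exceptional divisor is strong enough to transfer the Lipschitz bound to the limit eigenfunction.
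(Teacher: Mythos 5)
Your reduction via Proposition~\ref{prop:Honda}, and your identification of the key ingredients (Bochner, a refined Kato inequality for eigenfunctions, and the $L^p$ control on $\mathrm{Ric}_-$) agree with the paper. However, after that the routes diverge, and I believe your version has a genuine gap.

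The paper does \emph{not} run a De Giorgi--Moser iteration. It instead makes a \emph{single} application of the Bochner--Kato differential inequality for $(1+|\nabla f|^2)^\beta$ (with $\beta\in(0,1)$), integrates it against the heat kernel $\rho_s=H(x_0,\cdot,s)$ of $(Y,\omega_\epsilon)$ centered at a fixed regular point $x_0$, and uses the uniform heat kernel bound $H\leq C(s)$ of Theorem~\ref{thm:GPSS}(b) to convert $\int_Y|\widetilde{\mathrm{Ric}}_-|^{1/(1-\beta)}\rho_s$ into $\Vert\widetilde{\mathrm{Ric}}_-\Vert_{L^{1/(1-\beta)}(\omega_\epsilon)}$, which $\to 0$ as $\epsilon\to 0$ by interpolating the uniform $L^p$ bound with local smooth convergence to zero. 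The threshold $p>\tfrac{2n-1}{n}$ arises precisely because one must simultaneously have $\beta>\tfrac{n-1}{2n-1}$ (so the improved Kato constant $\kappa<\tfrac{1}{2n-1}$ can absorb the factor $2(1-\beta)$ in the Bochner computation) and $\tfrac{1}{1-\beta}<p$ (so the Ricci term is small in $L^1$ against the bounded heat kernel). After this one-shot estimate, letting $s_0\to0$ gives a pointwise bound $|\nabla \tilde u_i|(x_0)\leq C_b$, \emph{uniform in $x_0$} because the heat kernel bound is.

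Your Moser iteration cannot reproduce this threshold. After passing to $v=u^\alpha$ to absorb the Kato gradient gain, you are left with $\Delta v\geq -\alpha V v$ with $V=|\mathrm{Ric}_-|$, i.e., the improved Kato has been ``spent'' and you have an ordinary linear differential inequality. For Moser iteration to reach $L^\infty$ with the Sobolev inequality of Theorem~\ref{thm:GPSS}(a) — whose exponent $q<\tfrac{n}{n-1}$ corresponds to effective real dimension $2n$ — the potential must be controlled in $L^{q'}$ with $q'=\tfrac{q}{q-1}>n$, at every step of the iteration; the starting exponent (which is what the Kato constant changes) does not affect this absorption condition. In particular $V\in L^p$ with $p>\tfrac{2n-1}{n}$ is nowhere near enough: one would need $p>n$, which is strictly larger for all $n\geq 2$. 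Moreover, a uniform $\Vert|\nabla f_\epsilon|\Vert_{L^\infty(Y,\omega_\epsilon)}\leq C$ independent of $\epsilon$ is almost certainly false — the approximating metrics degenerate near the exceptional set — and the paper's argument carefully avoids claiming it, producing instead a uniform pointwise bound at interior points. You should replace the Moser step with the heat kernel integration argument of \cite[Proposition 15]{Sz24}, modified by the improved Kato inequality as the paper does.

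A secondary point: the passage from eigenfunctions of $(Y,\omega_\epsilon)$ to eigenfunctions of $\hat X$ is not just a diagonal limit. The paper first truncates $u$ by the cutoff functions $\phi_i$ of \cite[Lemma 3.2]{Song14}, decomposes $u_i=\phi_i u$ in the eigenbasis of $\hat X$, and uses Lemma~\ref{lem:eigenfunction-convergence} together with the eigenvalue counting bound from Theorem~\ref{thm:GPSS}(c) to approximate the low-frequency part $\tilde u_i=\sum_{\lambda_j<b+1}\Phi_j$ by sums of at most $N\leq C_b$ eigenfunctions on $(Y,\omega_\epsilon)$. Your plan should account for this step.
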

\begin{proof}
  By Proposition~\ref{prop:Honda} it is enough to show that eigenfunctions of the Laplacian on $(X^{\text{reg}}\setminus D, \omega)$ are Lipschitz. Suppose therefore that $u\in W^{1,2}$ satisfies $\Delta u = -bu$ on $X^{\text{reg}}\setminus D$, for a constant $b$. 
  
  We will first consider an eigenfunction $f$ of the Laplacian on $(Y, \omega_\epsilon)$ satisfying $\Delta f = -\mu f$, where $\mu < b+1$. Recall that we have the following improved Kato inequality for the functions $f$: for any $\kappa < \frac{1}{2n-1}$ (where $n$ is the complex dimension), there is a $C_\kappa > 0$, depending also on the dimension, such that
  \begin{equation}\label{eq:improvedKato}
  |\nabla^2 f|^2 \geq (1+\kappa)|\nabla|\nabla f||^2 - C_\kappa \mu^2 f^2. 
  \end{equation}
  To see this one can follow the calculation in Munteanu~\cite{M12} (see also Cheng-Yau~\cite{CY75}). At a point in orthonormal frame $e_1,\ldots, e_{2n}$, where $e_1 = |\nabla f|^{-1} \nabla f$, we have 
  \[ |\nabla^2 f|^2 \geq |f_{11}|^2 + 2\sum_{\alpha > 1} |f_{1\alpha}|^2 + \frac{1}{2n-1} \left| \Delta f - f_{11}\right|^2. \]
  The inequality \eqref{eq:improvedKato} follows by using that $|\nabla|\nabla f||^2 = \sum_i f_{1i}^2$ and $\Delta f = -\mu f$. 
  
  For $\beta\in (0,1)$ we have the following, for any smooth function $f$
  \[ \begin{aligned}
  \Delta\Big( 1 + |\nabla f|^2\Big)^\beta &= 4\beta(\beta-1) \Big(1 + |\nabla f|^2\Big)^{\beta-2}|\nabla f|^2 |\nabla |\nabla f||^2 + \beta \Big( 1+ |\nabla f|^2\Big)^{\beta-1} \Delta |\nabla f|^2 \\
  &= \beta\big(1 + |\nabla f|^2\big)^{\beta-1} \left( 2|\nabla^2f|^2  - \frac{ 4(1-\beta) |\nabla f|^2}{1 + |\nabla f|^2} |\nabla |\nabla f||^2 + 2\mathrm{Ric}(\nabla f, \nabla f)\right) \\
  &\qquad + 2\beta\big( 1 + |\nabla f|^2\big)^{\beta-1} \langle \nabla f, \nabla \Delta f\rangle,
  \end{aligned}, \]
  where we used  Bochner's formula in the second equality.

  Let us define $\widetilde{\mathrm{Ric}} = \mathrm{Ric} - \lambda g$, so that
  \[ \mathrm{Ric}(\nabla f, \nabla f) = \widetilde{\mathrm{Ric}}(\nabla f, \nabla f) + \lambda|\nabla f|^2. \]
  Suppose now that $\Delta f = -\mu f$, and $\beta$ is chosen so that $2(1-\beta) < 1+\kappa$, for the $\kappa$ in \eqref{eq:improvedKato}. Then we obtain
  \begin{equation}\label{eq:D1}
  \begin{aligned}
  \Delta \Big(1 + |\nabla f|^2\Big)^\beta &\geq -2\beta C_\kappa \mu^2 f^2 - 2\beta (|\widetilde{\mathrm{Ric}}_-| - \lambda) |\nabla f|^{2\beta} + 
  2\beta\big( 1 + |\nabla f|^2\big)^{\beta-1} \langle \nabla f, \nabla \Delta f\rangle \\
  &\geq -2\beta C_\kappa \mu^2 f^2 - C_\beta |\nabla f|^2-  |\widetilde{\mathrm{Ric}}_-|^{\frac{1}{1-\beta}} - 1\\
  &\qquad +2\beta\big( 1 + |\nabla f|^2\big)^{\beta-1} \langle \nabla f, \nabla \Delta f\rangle, 
  \end{aligned}
  \end{equation}
  where $C_\beta$ depends on $\beta$ (recall also that we are taking $|\lambda|\leq 1$), and $|\widetilde{\mathrm{Ric}}_-|$ means the absolute value of the most negative eigenvalue of $\widetilde{\mathrm{Ric}}(\omega_\epsilon)$. 
  
  Now let $f_t$ denote the solution of the heat equation on $\omega_\epsilon$ with initial condition given by $f$. Since $f$ is an eigenfunction, we have $f_t = e^{-t\mu} f$. We also choose a point $x_0\in X^{\text{reg}}\setminus D$, and identify it with its preimage $\pi^{-1}(x_0)\in Y$. Let $\rho_t(y) = H(x_0, y, t)$ denote the heat kernel centered at $x_0$ on $(Y, \omega_\epsilon)$. Similarly to \cite[Proposition 15]{Sz24} we have the following, using also \eqref{eq:D1}, where we omit the volume form $\omega_\epsilon^n$ from the calculation. 
  \begin{equation}\label{eq:dds} \begin{aligned}
  \partial_s \int_Y \Big(1 + |\nabla f_{t-s}|^2\Big)^\beta\, \rho_s &= 
  \int_Y \beta \Big(1 + |\nabla f_{t-s}|^2\Big)^{\beta-1} \left( - 2\langle \nabla f_{t-s}, \nabla\Delta f_{t-s}\rangle \rho_s\right) + \Big(1 + |\nabla f_{t-s}|^2\Big)^\beta\, \Delta\rho_s \\
  &\geq  -2\beta C_\kappa \mu^2 \int_Y f_{t-s}^2\, \rho_s - C_\beta \int_Y |\nabla f_{t-s}|^2\, \rho_s - \int_Y |\widetilde{\mathrm{Ric}}_-|^{\frac{1}{1-\beta}}\, \rho_s - 1. 
  \end{aligned} \end{equation}

We choose $\beta$ large enough such that $2(1-\beta) < 1+\frac{1}{2n-1}$, and at the same time $(1-\beta)^{-1} < p$. This is possible by our assumption that $p > \frac{2n-1}{n}$. Note that then as $\epsilon \to 0$, we will have $\Vert \widetilde{\mathrm{Ric}}_-\Vert_{L^{(1-\beta)^{-1}}} \to 0$ by using Hölder's inequality, since we have a uniform $L^p$ bound for $|\widetilde{\mathrm{Ric}}_-|$, while at the same time $\widetilde{\mathrm{Ric}}_- \to 0$ locally smoothly on $\pi^{-1}(X^{\text{reg}}\setminus D)$. 

The upper bound for the heat kernel in Theorem~\ref{thm:GPSS} then implies that if we fix any $s_0 > 0$, then for $\epsilon < \epsilon(s_0)$ we will have
\[ \int_Y |\widetilde{\mathrm{Ric}}_-|^{\frac{1}{1-\beta}}\, \rho_s < 1. \]
Let us also assume that $\Vert f\Vert_{L^2} \leq 1$, which implies that $|f_{t-s}| \leq C_b$ for a constant $C_b$ depending on the upper bound for the eigenvalue $\mu$.
From \eqref{eq:dds} we then get that for $s\in (s_0, 2)$
\[ \partial_s \int_Y \Big(1 + |\nabla f_{t-s}|^2\Big)^\beta\,\rho_s + C_\beta f_{t-s}^2\, \rho_s  \geq  - C_1 (b+1)^2, 
\]
where $C_1$ depends  on $\beta, \kappa$, but not on $\epsilon, s_0$. 
We set $t=1+s_0$, and integrate this inequality from $s=s_0$ to $s=1+s_0$. We find that
\[ \int_Y \Big(1 + |\nabla f_1|^2\Big)^\beta \rho_{s_0} \leq \int_Y \Big(1+|\nabla f_0|^2\Big)^\beta\, \rho_{1+s_0} \leq C_2(b+1)^2, \]
where $C_2$ is independent of $\epsilon, s_0$ (as long as $\epsilon < \epsilon(s_0)$). Using the upper bound for the heat kernel again, and using that we have a bound $\Vert \nabla f\Vert_{L^2} < C_b$ depending on the eigenvalue (see \cite{GPSS2}), we have
\[ \int_Y \Big(1 + |\nabla f_1|^2\Big)^\beta \rho_{s_0} \leq C_b, \]
for a constant $C_b$ depending on the upper bound $b+1$ for the eigenvalue. Since $f_1 = e^{-\mu}f$, this implies that 
\begin{equation} \label{eq:Dfest1}
\int_Y |\nabla f|^{2\beta}\, \rho_{s_0} \leq C_b,
\end{equation}
as long as $\epsilon < \epsilon(s_0)$.

Next, we assume that $f = \sum_{j=1}^N f_j$ is decomposed into eigenfunctions on $(Y, \omega_\epsilon)$, corresponding to eigenvalues that are at most $b+1$. From Theorem~\ref{thm:GPSS} we can arrange that $N < C_b$ for a constant depending on $b$, but independent of $\epsilon$. Suppose that $\Vert f\Vert_{L^2} \leq 1$. Then it follows that $\Vert f_j\Vert_{L^2}\leq 1$ for all $j$. At the same time we have
\[ |\nabla f|^{2\beta} \leq \left(\sum_{j=1}^N |\nabla f_j|\right)^{2\beta} \leq \sum_{j=1}^N |\nabla f_j|^{2\beta}, \]
noting that $\beta \leq \frac{1}{2}$. 
Applying \eqref{eq:Dfest1} we get
\begin{equation}\label{eq:Dfest2}
\int_Y |\nabla f|^{2\beta}\, \rho_{s_0} \leq NC_b \leq C_b^2,
\end{equation}
as long as $\epsilon < \epsilon(s_0)$. 

Let us now consider the eigenfunction $u$ on $(X^{\text{reg}}\setminus D, \omega)$ satisfying $\Delta u = -bu$. We can assume that $\Vert u\Vert_{L^2} \leq 1$, which implies that we have 
\[ \sup_X |u| + \int_X |\nabla u|^2\, \omega^n < C_b, \]
for a constant depending on $b$. We will apply the estimate \eqref{eq:Dfest2} to a suitable approximation of $u$ on $Y$. First, recall from \cite[Lemma 3.2]{Song14} that we have an exhaustion $K_1\subset K_2\subset\ldots$ of $X^{\text{reg}}\setminus D$ and functions $\phi_i$ that are supported in $X^{\text{reg}}\setminus D$, equal to 1 on $K_i$ and $\int |\nabla \phi_i|^2 \to 0$ as $i\to\infty$. We use these to define the functions $u_i = \phi_i u$. 

Now we decompose 
\[ u_i = \sum_j \Phi_j \]
into eigenfunctions corresponding to eigenvalues $\lambda_j$. By Lemma~\ref{lem:eigenfunction-convergence}, we know that on $(Y, \omega_\epsilon)$ there are corresponding eigenfunctions $\Phi_{j, \epsilon}$ converging to the $\Phi_j$ locally smoothly on $X^{\text{reg}}\setminus D$. Then we define the functions
\[ f_{i, \epsilon} = \sum_{\lambda_j < b+1} \Phi_{j,\epsilon}, \]
on $(Y, \omega_\epsilon)$. We can apply \eqref{eq:Dfest2} to the $f_{i,\epsilon}$, and then for fixed $s_0 > 0$ we can let $\epsilon\to 0$. The $f_{i,\epsilon}$ converge to $\tilde{u}_i$, where 
\[ \tilde{u}_i = \sum_{\lambda_j < b+1} \Phi_j, \]
and so we deduce that 
\[ \int_X |\nabla \tilde{u}_i|^{2\beta}\, \rho_{s_0} \leq C_b^2. \]
Letting $s_0 \to 0$ we find that $|\nabla \tilde{u}_i|(x_0) \leq C_b^2$. Finally, the $u_i$ converge locally smoothly on $X^{\text{reg}}\setminus D$ to $u$, so we get the required gradient estimate for $u$. 
\end{proof}

The following Lemma justifies the above claim of convergence of eigenfunctions. 
\begin{lemma}\label{lem:eigenfunction-convergence}
Let $0=\lambda_0<\lambda_1\leq \cdots\leq \lambda_k\leq \cdots$ be the sequence of eigenvalues of $\hat X$, and  $\lambda_{k, \epsilon}$ be the corresponding Laplace eigenvalues for $(Y, \omega_{\epsilon})$. Then after possibly passing to a subsequence in $\epsilon$, we have
\[\lim_{\epsilon\to 0}\lambda_{k, \epsilon} = \lambda_k\]
Moreover, the eigenfunctions converge in $C^{\infty}_{loc}(X^{\text{reg}})$.  (up to reparamerization of each eigenspace. )
\end{lemma}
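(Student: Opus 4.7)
The plan is to combine the min-max characterization of eigenvalues with the uniform heat kernel bound and local smooth convergence provided by Theorem~\ref{thm:GPSS} and Definition~\ref{defn:tameapprox}. Writing $R_\epsilon(f) := \int_Y |\nabla f|^2_{\omega_\epsilon}\,\omega_\epsilon^n / \int_Y f^2\,\omega_\epsilon^n$ and $R$ for the analogous Rayleigh quotient on $\hat X$, both $\lambda_{k,\epsilon}$ and $\lambda_k$ are characterized by min-max over $(k+1)$-dimensional subspaces of the respective Dirichlet form domains.

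For the upper bound $\limsup_{\epsilon\to 0}\lambda_{k,\epsilon}\leq \lambda_k$, I would take an orthonormal system $\phi_0,\ldots,\phi_k$ of $\hat X$-eigenfunctions and, using the zero-capacity of $\hat X\setminus (X^{\text{reg}}\setminus D)$ recalled before Proposition~\ref{prop:Honda}, multiply by cutoffs supported in $X^{\text{reg}}\setminus D$ to produce test functions $\tilde\phi_j$ whose $L^2$-inner products and Rayleigh quotients $R(\tilde\phi_j)$ differ from $\delta_{ij}$ and $\lambda_j$ by at most $\eta$. Pulling these back to $Y$ via $\pi$, the locally smooth convergence $\omega_\epsilon\to \pi^*\omega$ on $\pi^{-1}(X^{\text{reg}}\setminus D)$ together with dominated convergence yields $R_\epsilon(\tilde\phi_j)\to R(\tilde\phi_j)$. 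Min-max on $(Y,\omega_\epsilon)$ then gives the upper bound after $\eta\to 0$.

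For compactness of the eigenfunctions, I would normalize $\|f_{k,\epsilon}\|_{L^2(\omega_\epsilon^n)}=1$ and use the reproducing formula $f_{k,\epsilon}(x)=e^{\lambda_{k,\epsilon}}\int_Y H_\epsilon(x,y,1)\,f_{k,\epsilon}(y)\,\omega_\epsilon^n(y)$ together with Cauchy--Schwarz and the heat kernel bound of Theorem~\ref{thm:GPSS}(b) to obtain a uniform $L^\infty$ bound (depending only on the upper bound for $\lambda_{k,\epsilon}$ already established). Combined with the smooth convergence $\omega_\epsilon\to \pi^*\omega$ on $\pi^{-1}(X^{\text{reg}}\setminus D)$, standard elliptic bootstrapping for the eigenfunction equation gives uniform $C^m_{loc}$ bounds on compact subsets of $X^{\text{reg}}\setminus D$. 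A diagonal extraction then produces a subsequence along which $\lambda_{k,\epsilon}\to \bar\lambda_k$ and $f_{k,\epsilon}\to f_k$ in $C^\infty_{loc}(X^{\text{reg}}\setminus D)$ for every $k$.

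To finish I would verify that $f_0,\ldots,f_k$ form an $L^2(\omega^n)$-orthonormal system of eigenfunctions of $\hat X$ with eigenvalues $\bar\lambda_k$; min-max on $\hat X$ then gives $\lambda_k\leq \bar\lambda_k$, which combined with the upper bound forces $\bar\lambda_k=\lambda_k$. The main obstacle is precisely this last identification: showing that orthonormality and membership in the Dirichlet form domain of $\hat X$ pass to the limit. This will rely on the uniform $L^\infty$ bound together with the fact that $\int_{Y\setminus \pi^{-1}(K)}\omega_\epsilon^n\to 0$ uniformly in $\epsilon$ as $K$ exhausts $X^{\text{reg}}\setminus D$, which follows from the uniform $L^p$ density bound in Definition~\ref{defn:tameapprox}(b) together with the fact that $\pi^{-1}(D)\cup E$ has zero measure, and again on the zero-capacity of the singular set to ensure $f_k$ lies in the correct form domain.
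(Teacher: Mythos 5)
Your proposal is correct and follows essentially the same strategy as the paper: both prove $\limsup_{\epsilon\to 0}\lambda_{k,\epsilon}\leq\lambda_k$ via the min-max principle applied to cutoffs of the $\hat X$-eigenfunctions, then derive a uniform $L^\infty$ bound on the $\omega_\epsilon$-eigenfunctions (you via the heat-kernel reproducing formula $f(x)=e^{\lambda t}\int H_\epsilon(x,\cdot,t)f$ and Theorem~\ref{thm:GPSS}(b), the paper via Moser iteration from \cite[Lemma 6.2]{GPSS2} — both resting on the same uniform Sobolev inequality), extract $C^\infty_{loc}$ limits by elliptic bootstrapping, and close the argument by recognizing the limits as an orthonormal family of $\hat X$-eigenfunctions with eigenvalues at most $\lambda_k$. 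The only detail worth adding is the paper's device of first fixing $k$ with a spectral gap $\lambda_{k+1}>\lambda_k$ to run the min-max comparison cleanly; your uniform volume-concentration argument for preserving $L^2$-orthonormality in the limit is exactly the right point to highlight and is handled correctly.
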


\begin{proof}
Let $\{u_i\}_{i = 0}^\infty\subset L^{\infty}\cap H^1(\hat X)$ be an orthonormal sequence of eigenfunctions of $\hat X$ with $\Delta u_i = -\lambda_iu_i$. For any $k$, let $\tilde u_{k} = \phi_{i}u_k$ where $\phi_i$ are the cutoff functions from \cite[Lemma 8.2]{GPSS2}. Now fix $k$ such that $\lambda_{k+1}>\lambda_k$, and consider the $k+1$-dimensional subspace \[E = \text{Span}_{\mathbb R}\{\tilde u_{0}, \ldots,  \tilde u_{k}\}\]
then for any $i\gg 1$, there exist $\epsilon_0\ll 1$ such that for any $\epsilon<\epsilon_0$, we have
\[\int_{Y}\tilde u_j\tilde u_k \omega_{\epsilon}^n =  \delta_{jk}+o(1)\]
and 
\[\int_Y\langle \nabla\tilde u_j, \nabla\tilde u_k\rangle_{\omega_{\epsilon}}\omega_{\epsilon}^n = \delta_{jk}\lambda_j+o(1)\]
where the $o(1)$ term can be made arbitrarily small by taking $i$ large. Therefore for any $u\in E$, we have 
\[\frac{\int_Y|\nabla u|_{\omega_{\epsilon}}^2\omega_{\epsilon}^n}{\int_Y u^2\omega_{\epsilon}^n}\leq \lambda_k+o(1)\]
and it follows from the Rayleigh quotient characterization of eigenvalues that \[\limsup_{\epsilon \to 0}\lambda_{k, \epsilon}\leq \lambda_k.\] 

On the other hand, if we let $u_{0, \epsilon}, \ldots, u_{k, \epsilon}$ be the corresponding sequence of orthonormal eigenfunctions corresponding to $\lambda_{0, \epsilon}\ldots, \lambda_{k, \epsilon}$ on $(Y, \omega_{\epsilon})$, then by the Sobolev inequality and Moser's iteration method from \cite[Lemma 6.2]{GPSS2}, we have $\|u_{k, \epsilon}\|_{L^{\infty}}\leq C$ is uniformly bounded independent of $\epsilon$, and $\lambda_{k, \epsilon}$ is also uniformly bounded independent of $\epsilon$, therefore by passing to a subsequence and letting $\epsilon \to 0$, the tuple $(u_{0, \epsilon}, \ldots, u_{k, \epsilon})$ converges to $k+1$-orthonormal eigenfunctions of $\hat X$ with eigenvalues less than or equal to $\lambda_k$, and therefore the eigenvalues $\lambda_{k, \epsilon}$ (and the eigenfunctions) must converge to $\lambda_k$ (and and the eigenfunctions of $\hat X$) after passing to a subsequence. 
\end{proof}

As a consequence of Theorem \ref{thm:LpRCD}, we get the RCD property for certain K\"ahler-Einstein spaces $(X, \omega_{KE})$ with general automorphism groups.  This extends  \cite[Theorems 3,4]{Sz24} where it was assumed that the automorphism group is discrete, and approximations using constant scalar curvature metrics were used. 
\begin{corollary}\label{cor:extremal}
Let $(X, \omega_{KE})$  be a K\"ahler-Einstein space with log terminal singularities with ${\rm Ric}(\omega_{KE})=\lambda \omega_{KE}$. 
Suppose that $ \omega_{KE}$ is   $\mathbb{K}$-invariant, where $\mathbb{K} $ is a maximal compact subgroup of the reduced automorphism group ${\rm Aut_{red}}(X)$ and $\mathbb{K} $  contains a  maximal real  torus   $T\simeq  (S^1)^r$ of ${\rm Aut_{red}}(X)$. 
Assume that $X$ admits  a $T  $-equivariant   resolution of singularities of Fano type $\pi: Y \rightarrow X$ and $\pi$ is an isomorphism over $\pi^{-1}(X^{\text{reg}})$. Then the metric completion   $\overline{(X^{\text{reg}}, d_{KE})}$,  equipped with the measure $\omega^n_{KE}$ is  a non-collapsed $RCD(2n, \lambda)$-space  and it is homeomorphic to $X$ when $X$ is a normal projective variety.
\end{corollary}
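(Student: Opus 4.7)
The plan is to construct a $T$-equivariant tame approximation $\omega_\epsilon$ of $\omega_{KE}$ on the resolution $Y$ by $T$-extremal K\"ahler metrics in the classes $[\pi^*\omega_X + \epsilon \omega_Y]$, and then to invoke Theorem~\ref{thm:RCD2} with $p=2$, noting that $2 > \tfrac{2n-1}{n}$ for every $n\geq 1$. The Fano-type assumption on $Y$ and the $T$-equivariance of $\pi$ enter both in the construction of these approximations and in the derivation of the Ricci bound.

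First, starting from a $T$-invariant K\"ahler metric $\omega_Y$ on $Y$, the construction of Proposition~\ref{prop:L1approx}, carried out $T$-invariantly as in Proposition~\ref{solitontameapprox} by averaging all data under $T$, produces $T$-invariant smooth K\"ahler metrics $\omega^{(0)}_\epsilon = \pi^*\omega_X + \epsilon \omega_Y + \ddbar v_\epsilon$ satisfying the bounds of Definition~\ref{defn:tameapprox} and converging locally smoothly to $\pi^*\omega_{KE}$ on $\pi^{-1}(X^{\text{reg}})$. Using the Fano-type hypothesis together with $T$-equivariant openness and a priori estimates for the extremal continuity path (of Chen--Cheng type, adapted to the torus-invariant setting), I would then deform within each K\"ahler class $[\omega^{(0)}_\epsilon]$ to obtain a genuine $T$-extremal K\"ahler metric $\omega_\epsilon$ in the same class. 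The uniform $C^0$-potential and $L^p$-volume bounds of the extremal continuity method ensure that $\omega_\epsilon$ inherits the tameness of $\omega^{(0)}_\epsilon$.

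The key payoff of extremality is that $s(\omega_\epsilon)$ is a holomorphy potential for a vector field in $\mathrm{Lie}(T_{\mathbb{C}})$, so it takes values in the image of the $T$-moment polytope of $(Y,[\omega_\epsilon])$; since these polytopes vary continuously with the class and stay in a bounded region as $\epsilon \to 0$, one obtains a uniform bound $|s(\omega_\epsilon)| \leq C$. Combined with the standard K\"ahler Chern--Weil identity
\[
\int_Y |\mathrm{Ric}(\omega_\epsilon)|^2\,\omega_\epsilon^n
= \frac{1}{4}\int_Y s(\omega_\epsilon)^2\,\omega_\epsilon^n \;-\; n(n-1)(2\pi)^2\, c_1(Y)^2\cdot [\omega_\epsilon]^{n-2},
\]
the scalar curvature bound, the uniform volume bound $\int_Y\omega_\epsilon^n = [\omega_\epsilon]^n$, and the polynomial dependence of the Chern number $c_1(Y)^2 \cdot [\omega_\epsilon]^{n-2}$ on the bounded class $[\omega_\epsilon]\in H^{1,1}(Y)$ together yield $\Vert \mathrm{Ric}_-(\omega_\epsilon)\Vert_{L^2(\omega_\epsilon)} \leq C$.

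With this uniform $L^2$ Ricci control in hand, Theorem~\ref{thm:RCD2} applied with $p=2$ gives the non-collapsed $RCD(2n,\lambda)$ structure of $\overline{(X^{\text{reg}}, d_{KE})}$ equipped with $\omega_{KE}^n$. The homeomorphism with $X$ in the normal projective case then follows as in \cite[Theorems 3,4]{Sz24} from the tangent cone theory of non-collapsed RCD spaces arising from K\"ahler--Einstein metrics. The main obstacle in the plan is the construction of tame extremal approximations: one must produce $T$-extremal K\"ahler metrics in classes that approach the boundary of the K\"ahler cone of $Y$ while preserving the uniform $L^\infty$-potential and $L^p$-density bounds. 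A natural workaround, which is arguably cleaner, is to replace the extremal equation by a well-chosen twisted K\"ahler--Einstein or soliton equation on $Y$ whose solutions directly come with a uniform scalar curvature bound, since this is the only ingredient the Chern--Weil step actually requires.
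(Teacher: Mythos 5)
Your overall strategy matches the paper's: construct a tame approximation of $\omega_{KE}$ by $T$-invariant extremal K\"ahler metrics on $Y$ in the classes $[\pi^*\omega_X + \varepsilon\omega_Y]$, bound the scalar curvature uniformly via the moment polytope, convert this to a uniform $L^2$ bound on $\mathrm{Ric}(\omega_\varepsilon)$ through the Chern--Weil identity, and invoke Theorem~\ref{thm:RCD2} with $p=2$, with the homeomorphism in the projective case coming from \cite{Sz24}. That much is correct and in line with the paper.

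The genuine gap is precisely the step you flag yourself as ``the main obstacle'': producing the extremal approximants. Your sketch (``$T$-equivariant openness and a priori estimates of Chen--Cheng type'') would show that the set of parameters for which an extremal metric exists is open and that solutions stay controlled along a continuity path, but that alone does not close the argument --- one still needs a global criterion guaranteeing that the continuity method does not terminate, i.e.\ some form of properness/coercivity. In the paper this is exactly where the K\"ahler--Einstein hypothesis enters in an essential way: by \cite[Theorem 2.19]{Li22} (building on \cite{BBEGZ19}), the existence of the $\mathbb{K}$-invariant K\"ahler--Einstein metric on $(X,D)$ implies that the Mabuchi functional is $T^{\mathbb{C}}$-coercive; and it is the recent work \cite{BJT24} together with \cite{PT24} that converts this coercivity, via the Fano-type hypothesis on the resolution, into the existence of $T$-invariant extremal metrics $\widehat{\omega}_\varepsilon\in[\pi^*\omega_X+\varepsilon\omega_Y]$ forming a tame approximation. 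Without invoking coercivity of the Mabuchi functional as the closedness input, the continuity-path plan is incomplete. A secondary, minor point: the uniform bound on $S(\widehat\omega_\varepsilon)=\langle m_\varepsilon,\xi_\varepsilon^{\rm ext}\rangle+s_\varepsilon$ requires not only that the moment polytopes stay bounded but also that the extremal vector fields $\xi_\varepsilon^{\rm ext}$ stay bounded (in fact $\xi_\varepsilon^{\rm ext}\to 0$ since $[\omega_\varepsilon]\to\pi^*[\omega_X]$); the paper records this explicitly, and it is worth stating since the moment polytope alone does not control the pairing. Your closing suggestion of replacing the extremal equation by a twisted K\"ahler--Einstein or soliton equation with a built-in scalar curvature bound is a plausible alternative but would require its own existence theory in the degenerating classes; the paper's route via \cite{BJT24,PT24} is the one that is actually available off the shelf.
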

Here we say a resolution  of singularities $\pi: Y\rightarrow X$ is of {\it Fano type} (cf. \cite{BJT24}) if $Y$ admits a finite measure of form $\widehat{\mu}_Y=e^{-f}\mu_Y$ with $f$ a quasi-psh function and $\mu_Y$ a smooth volume form on $Y$, such that ${\rm Ric}(\widehat{\mu}_Y):={\rm Ric}(\mu_Y)+ i\partial\bar \partial f$ is $\pi$-semipositive, i.e. ${\rm Ric}(\widehat{\mu}_Y) + C\pi^* \omega_X\geq 0$ for some $C>0$ and a K\"ahler metric $\omega_X$ on $X$. A resolution  of singularities for which $-K_Y$ is relatively nef over $X$ is of Fano type. Another class of examples is provided by singular K\"ahler-Einstein varieties with \emph{admissible} singularities, defined by Li-Tian-Wang~\cite{LTW}. In this case $\widehat{\mu}_Y$ is given by the pullback of the adapted volume form on $X$ for a resolution $\pi:Y\to X$ whose discrepancies are all non-positive.

\begin{proof}

Since   $\omega_{KE}\in [\omega_X]$ is a singular $\mathbb{K} $-invariant K\"ahler-Einstein metric on $X$,  by \cite[Theorem 2.19]{Li22} (see also \cite{BBEGZ19})  the  Mabuchi functional ${\bf M}_\omega$ is $T ^\mathbb{C}$-coercive, where 
 $T^\mathbb{C}$ is  the complexification of $T$.  Then it follows from  \cite{BJT24} and \cite{PT24} that there is a {\it tame} approximation of   $\omega_{KE}$ with extremal K\"ahler metrics    $\widehat{\omega}_\varepsilon\in [\omega_\varepsilon]$ where $\omega_{\varepsilon}= \pi^\ast \omega_X+ \varepsilon \omega_Y$, for some $T$-invariant K\"ahler metric $\omega_Y$ on $Y$.  Recall  that for each pair  $(T, [\omega_{\varepsilon}])$, we can find a unique  vector field  $\xi_\varepsilon^{\rm ext}\in \mathfrak{t}={\rm Lie}(T)$, called {\it the extremal vector field} such that if $\widehat{\omega}_\varepsilon\in [\omega_\varepsilon]$ is a  $T$-invariant extremal metric   then $  \xi_\varepsilon^{\rm ext}=J \hat \nabla S(\widehat{\omega}_\varepsilon) $ is a Killing vector field for $\widehat{\omega}_\varepsilon$.  Let  $m_\varepsilon:   Y\rightarrow \mathfrak{t}^\vee$ be the  moment map for $(T, \omega_{\varepsilon})$, i.e. a $T$-invariant  smooth map $m_\varepsilon$  such that  $d\langle m_\varepsilon
, \xi \rangle= -\omega_\varepsilon(\xi, \cdot), \forall \xi\in  \mathfrak{t} $,  uniquely defined by the  normalization    $\int_Y \langle m_\varepsilon, \xi\rangle\omega_\varepsilon^n=0$. 

\smallskip
Then in  our setting, the extremal metric $\widehat{\omega}_\varepsilon \in [\omega_\varepsilon]$ satisfies $S(\widehat{\omega}_\varepsilon ) =  \langle m_\varepsilon, \xi_\varepsilon^{\rm ext}\rangle + s_{ \varepsilon} $, where  $$s_\varepsilon:= \frac{c_1(Y)\cdot \{\widehat{\omega}_\varepsilon\}^{n-1}}{\{\widehat{\omega}_\varepsilon\}^n}\rightarrow \lambda. $$ 
 It suffices to check that $\int_Y |{\rm Ric}(\widehat{\omega}_\varepsilon)|^2\widehat{\omega}_\varepsilon^n$ is uniformly  bounded. Indeed, first we  remark that 
\[
\int_Y |{\rm Ric}(\widehat{\omega}_\varepsilon)|^2\widehat{\omega}_\varepsilon^n= \int_Y S(\widehat{\omega}_\varepsilon)^2 \widehat{\omega}_\varepsilon^n-  n(n-1)c^2_1(Y)\cdot \{\widehat{\omega}_\varepsilon\}^{n-2}.
\]
Since $\omega_\varepsilon = \pi^*\omega_X +\varepsilon \omega_Y $ converges smoothly to $\pi^*\omega_X$, one can imply that $\xi_\varepsilon^{\rm ext} $ converges to 0  in $ \mathfrak{t} $ (cf. \cite{BJT24}) and  $m_{\varepsilon}(Y)$ is contained in a compact set of $\mathfrak{t}^\vee$ for all $\varepsilon$ small enough. Therefore  $S(\widehat{\omega}_\varepsilon ) =\langle m_\varepsilon, \xi_\varepsilon^{\rm ext}\rangle + s_{ \varepsilon}  $ is uniformly bounded and converges to $\lambda$ as $\varepsilon\rightarrow 0$.  Hence  $\int_Y |{\rm Ric}(\widehat{\omega}_\varepsilon)|^2\widehat{\omega}_\varepsilon^n$ is uniformly bounded and the conclusion follows from  Theorem~\ref{thm:LpRCD}.  When $X$ is a normal projective variety,  the metric completion   $\overline{(X^{\text{reg}}, d_{KE})}$ is homeomorphic to $X$ by \cite[Theorem 17]{Sz24}. 
\end{proof}

\end{document}